
\documentclass[]{amsart} \usepackage{amsfonts}
\usepackage{bbding} \usepackage[dvips]{graphicx} \usepackage{amssymb,amsmath}
\usepackage{amsbsy,amscd}
\usepackage{xcolor}
\usepackage{enumerate}
\usepackage{lmodern}
\usepackage{stmaryrd}
\usepackage[all, knot]{xy}
\usepackage{leftidx}
\usepackage[colorlinks=true,
pdfstartview=FitE, linkcolor=green!70!black, citecolor=brown,
urlcolor=blue]{hyperref}

\newcommand{\A}{\mathbf{A}}
\newcommand{\Fix}{\mathrm{Fix}}
\newcommand{\cyp}[2]{{#1}({#2})}
\DeclareMathOperator{\Isom}{Isom} \DeclareMathOperator{\PGL}{PGL}
 
\DeclareMathOperator{\diag}{diag} 
\DeclareMathOperator{\Hom}{Hom} \DeclareMathOperator{\diam}{diam}
\DeclareMathOperator{\St}{St} 
\newtheorem{theorem}{Theorem}
\newtheorem{proposition}{Proposition}
\newtheorem{lemma}{Lemma}
\newtheorem{corollary}{Corollary}
\newtheorem{definition}{Definition}
\newtheorem*{theorem*}{Theorem}
\theoremstyle{remark}

\title{On the Hilbert geometry of simplicial Tits sets}
\date{}
\author[X. Nie]{Xin Nie} \address{Universit\'e Paris-Sud, Laboratoire de Math\'ematiques, Orsay F-91405 Cedex} \email{xin.nie@math.u-psud.fr}

\thanks{The research leading to these results has received funding from the European Research Council under the {\em European Community}'s seventh Framework Programme (FP7/2007-2013)/ERC {\em  grant agreement}}



\begin{document}

\begin{abstract}
The moduli space of convex projective structures on a simplicial hyperbolic
 Coxeter orbifold is either a point or the real line. Answering a question of M. Crampon, we prove that in the
latter case, when one goes to infinity in the moduli space,
the entropy of the Hilbert metric tends to $0$.
\end{abstract}

\maketitle

\tableofcontents
\section{Statement of the results}

Let $\mathbb{P}^n$ be the real projective space of dimension $n$. A real projective structure on a manifold, or more generally an orbifold, is an atlas which patches open sets of $\mathbb{P}^n$ together by projective transformations. 

An extensively studied class of projective structures (see \cite{benoist_divisible} and the references therein) comes from the following construction. An open subset $\Omega\subset\mathbb{P}^n$ is said to by \emph{properly convex} if it is a bounded convex subset of an affine chart $\mathbb{R}^n\subset\mathbb{P}^n$. Let $X=\widetilde{X}/\Pi$ be an orbifold, where $\widetilde{X}$ is homeomorphic to $\mathbb{R}^n$ and $\Pi$ is a
group acting discontinuously on $\widetilde{X}$. A \emph{properly convex
projective structure} on $X$ consists of a faithful representation 
$\rho:\Pi\rightarrow\PGL_{n+1}\mathbb{R}$ and a properly convex open set $\Omega\subset\mathbb{P}^n$, such that there is a $\rho$-equivariant homeomorphism $\widetilde{X}\rightarrow\Omega$. 


$\Omega$ is determined by $\rho$ in the sense that $\Omega$ is the convex hull of any $\rho(\Pi)$-orbit in $\Omega$ \cite{vey}, so the \emph{moduli space of properly convex projective structures}
$\mathfrak{P}(X)$  is defined as the subset in the moduli space of representations $$\mathfrak{P}(X)\subset\Hom(\Pi,\PGL_{n+1}\mathbb{R})/\PGL_{n+1}\mathbb{R}$$ given by those $\rho\in\Hom(\Pi,\PGL_{n+1}\mathbb{R})$ which arise from properly convex projective structures.  It is known that $\mathfrak{P}(X)$ is an open and closed subset in the moduli space of representations  \cite{benoist_3} and is homeomorphic to
$\mathbb{R}^{16g-16}$ when $X$ is a closed oriented surface of genus $g$ \cite{goldman}.

In this article we study the case where $X$ is a hyperbolic
simplicial Coxeter orbifold, namely, $X=\mathbb{H}^n/\Gamma$, where
$\mathbb{H}^n$ is the real hyperbolic $n$-space and
$\Gamma\subset\Isom(\mathbb{H}^n)$ is generated by orthogonal
reflections with respect to the faces of a bounded $n$-simplex $P$ in $\mathbb{H}^n$ such that $P$
is a fundamental domain of $\Gamma$. Up to conjugacy, $\Gamma$ is uniquely determined by a hyperbolic Coxeter diagram $J$, so we denote $X$ by $X_J$. Hyperbolic Coxeter diagrams are classified by F. Lann\'er \cite{flanner} as in Figure \ref{lanner}. In particular, they exist only when $n\leq 4$. We divide them into two classes: circular and non-circular ones.
\begin{figure}[ht]
\includegraphics[width=4in]{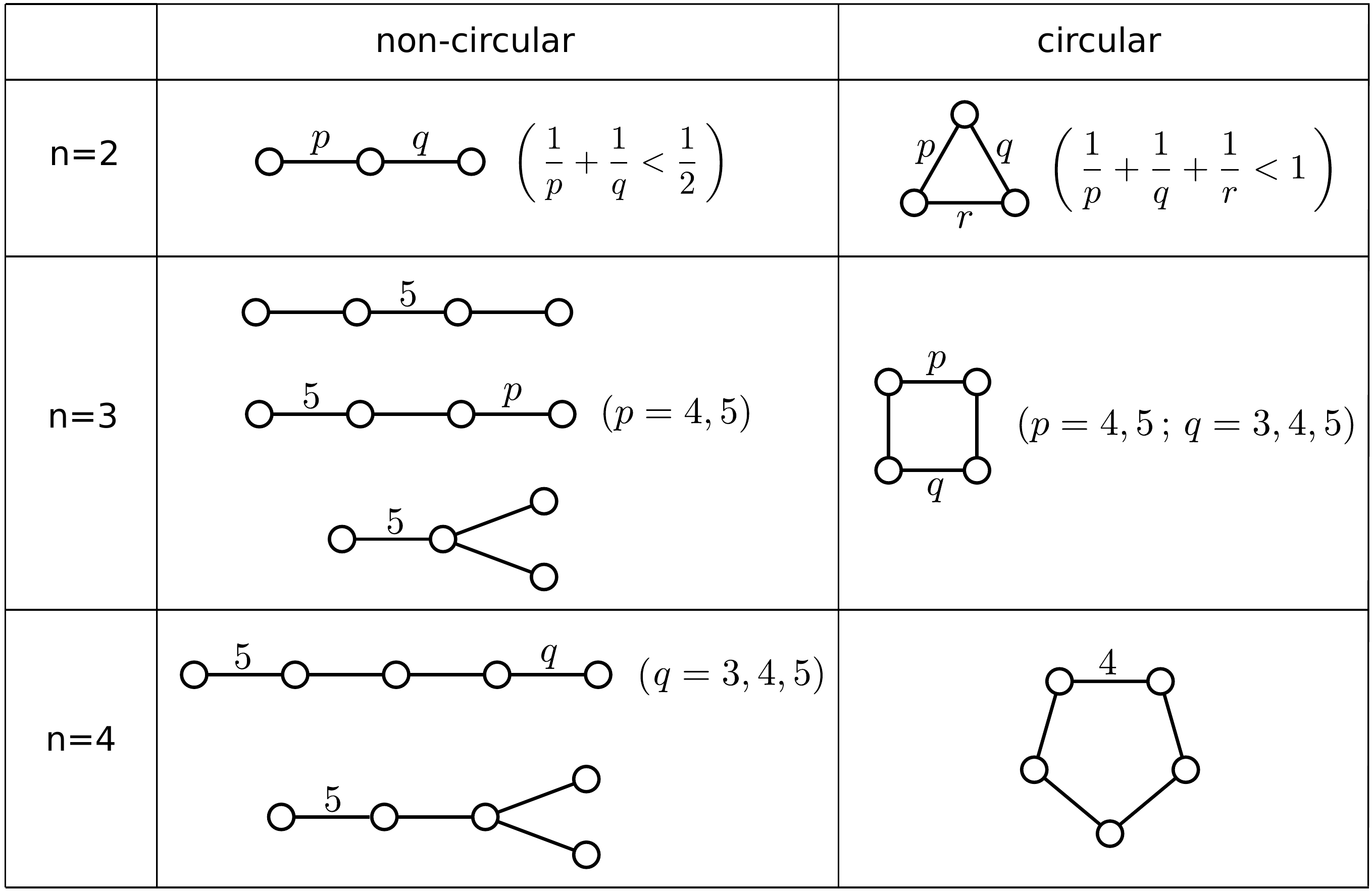}
\caption{All hyperbolic Coxeter diagrams as classified by F. Lann\'er. Here each edge without specified weight  has weight $3$.
} \label{lanner}
\end{figure}

The following result should be well-known to specialists and is stated in \cite{goldman0} in the two-dimensional case.

\begin{proposition}\label{goldman}
Let $J$ be a hyperbolic Coxeter diagram, then
$$
\mathfrak{P}(X_J)\cong\left\{
\begin{array}{cl}
\mathbb{R}_+&\mbox{ if } J\mbox{ is circular,}\\
 \mbox{a point } &\mbox{ otherwise.}
\end{array}
\right.
$$
\end{proposition}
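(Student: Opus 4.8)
The plan is to parametrize $\mathfrak{P}(X_J)$ by Cartan matrices, in the spirit of Vinberg's theory of linear reflection groups (see \cite{benoist_divisible} and the references there). Write $\Gamma=\langle s_0,\dots,s_n\rangle$, where $(s_is_j)^{m_{ij}}=1$ and $\pi/m_{ij}$ is the dihedral angle of $P$ along the ridge $\partial_iP\cap\partial_jP$; all $m_{ij}$ are finite since $P$ is a compact simplex. If $\rho$ is the holonomy of a convex projective structure on $X_J$, then each $\rho(s_i)$ must fix a projective hyperplane pointwise and act by $-1$ transversally (a neighbourhood of a mirror of $X_J$ is modelled on $\mathbb{R}^n/(\mathbb{Z}/2)$), so $\rho(s_i)=\mathrm{Id}-v_i\otimes\alpha_i$ with $v_i\in\mathbb{R}^{n+1}$, $\alpha_i\in(\mathbb{R}^{n+1})^*$ and $\alpha_i(v_i)=2$. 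Put $a_{ij}=\alpha_i(v_j)$. Restricting $\rho(s_is_j)$ to $\mathrm{span}(v_i,v_j)$ shows that it has order exactly $m_{ij}$ if and only if $a_{ij}a_{ji}=4\cos^2(\pi/m_{ij})$, with $a_{ij}=a_{ji}=0$ precisely when $m_{ij}=2$. Since $X_J$ is compact, $\rho(\Gamma)$ preserves no proper subspace, whence the $v_i$, and dually the $\alpha_i$, are bases and the Cartan matrix $A=(a_{ij})$ recovers $\rho$ up to conjugacy. Conversely, any $A$ with $a_{ii}=2$, non-positive off-diagonal entries, $a_{ij}a_{ji}=4\cos^2(\pi/m_{ij})$ on edges of the Coxeter diagram $J$ and $a_{ij}=a_{ji}=0$ on non-edges, and for which the associated projective simplex bounds a properly convex domain, comes from a convex projective structure on $X_J$ by Vinberg's theorem. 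Since $A$ and $DAD^{-1}$ (with $D$ positive diagonal) define the same reflections, $\mathfrak{P}(X_J)$ is identified with the set of such admissible $A$ modulo conjugation by positive diagonal matrices.

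Because $a_{ij}a_{ji}$ is prescribed on each edge of $J$, an admissible $A$ is determined by the positive ratios $t_{ij}=a_{ij}/a_{ji}$, one per edge, and conjugation by $\mathrm{diag}(d_0,\dots,d_n)$ multiplies $t_{ij}$ by $(d_i/d_j)^2$. In logarithmic coordinates this is the action of the $0$-cochains $\mathbb{R}^{V(J)}$ on the $1$-cochains $\mathbb{R}^{E(J)}$ of the graph $J$ by coboundaries, so $\mathfrak{P}(X_J)$ is an open subset of $H^1(J;\mathbb{R})$, of dimension $b_1(J)=|E(J)|-|V(J)|+1$ because $J$ is connected (the reflection group of a simplex being irreducible). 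The origin corresponds to the symmetric Cartan matrix with off-diagonal entries $-2\cos(\pi/m_{ij})$, that is, to the hyperbolic structure in the Klein model; in particular $\mathfrak{P}(X_J)$ is non-empty and contains this origin.

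It remains to bound $b_1(J)$. As $P$ is a \emph{compact} hyperbolic simplex (Lann\'er's condition), for every vertex $i$ the subdiagram $J\setminus\{i\}$, which governs the vertex figure of $P$ at its $i$-th vertex, is a spherical Coxeter diagram and hence a forest. I would then invoke the elementary fact that \emph{a finite simple graph $G$ with $G-v$ a forest for every vertex $v$ satisfies $b_1(G)\le 1$} (after deleting vertices of degree $\le 1$ one may assume $G$ has minimal degree $\ge 2$ and $b_1(G)\ge 2$; then $G$ contains a cycle, and deleting either a vertex lying off it, or --- if it is spanning, in which case $b_1(G)\ge 2$ forces a chord --- a vertex interior to one of the two arcs that the chord determines, leaves a subgraph still containing a cycle, contradicting the hypothesis). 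Therefore $b_1(J)\in\{0,1\}$: it is $0$ exactly when $J$ is a tree, i.e.\ has no loop, and $1$ exactly when $J$ has a loop.

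When $b_1(J)=0$ we get $H^1(J;\mathbb{R})=0$, so $\mathfrak{P}(X_J)$ is a single point, the hyperbolic structure. When $b_1(J)=1$ we get $H^1(J;\mathbb{R})\cong\mathbb{R}$, and $\mathfrak{P}(X_J)$ is a non-empty open subset of $\mathbb{R}$ containing the origin; the step I expect to be the main obstacle is to show that this subset is a single open interval, hence homeomorphic to $\mathbb{R}_+$. This amounts to delimiting the set of Cartan matrices whose invariant domain is properly convex and verifying that it is connected, and it rests on Vinberg's construction of that domain together with the openness and closedness of $\mathfrak{P}$ from \cite{benoist_3}. By contrast the cohomology count and the graph lemma are routine once the Cartan-matrix dictionary is set up; the single genuinely hyperbolic-geometric ingredient is Lann\'er's condition --- the compactness of $P$ --- which is exactly what forces $b_1(J)\le 1$.
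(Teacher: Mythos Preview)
Your setup is the same as the paper's: both identify $\mathfrak{P}(X_J)$ with the set of admissible Cartan-type matrices modulo conjugation by positive diagonal matrices. The divergence is in how the quotient is computed. The paper proves a lemma that two such matrices are diagonally conjugate iff all their \emph{cyclic products} agree, and then simply reads off from the Lann\'er table that a diagram containing a loop is necessarily a single circuit; the unique free cyclic product $\varphi(\mathbf A)=\alpha_{01}\alpha_{12}\cdots\alpha_{n0}$ then gives an explicit homeomorphism $[\mathbf A]\mapsto|\varphi(\mathbf A)|$ onto $\mathbb{R}_+$. Your route via $H^1(J;\mathbb{R})$ and the graph lemma (``$G-v$ a forest for all $v$ forces $b_1(G)\le1$'') is a genuine alternative: it avoids any appeal to the classification and would survive unchanged if one enlarged the class of diagrams, at the cost of a slightly more abstract bookkeeping. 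The cyclic-product lemma and your coboundary description are of course two faces of the same coin.

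The obstacle you flag, however, is not one. You treat $\mathfrak{P}(X_J)$ as an a priori \emph{open subset} of $H^1(J;\mathbb{R})$ and worry about connectedness; in fact it is all of $H^1(J;\mathbb{R})$. For \emph{every} admissible $A$ the Tits--Vinberg theorem already gives an open convex Tits set $\Omega$ in an affine chart (conditions $(i)$ and $(ii)$ of the paper's Section~\ref{section_pre} hold because $J$ is Lann\'er), and the paper's Lemma~\ref{bound} shows directly that $\Omega$ is contained in the simplex with vertices the reflection centres $f_0,\dots,f_n$, hence is properly convex. So every point of $\mathcal{M}_J/\!\sim$ already lies in $\mathfrak{P}(X_J)$; no openness/closedness argument from \cite{benoist_3} and no interval-cutting is needed. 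Once you insert that observation, your argument is complete and, arguably, more conceptual than the paper's table-lookup.
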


A proof of Proposition \ref{goldman} is given in Section \ref{section_moduli}.

For a circular hyperbolic Coxeter diagram $J$, we shall study how the convex set $\Omega$ deforms as $\rho$
goes to $0$ or $+\infty$ in $\mathfrak{P}(X_J)\cong\mathbb{R}_+$.
\begin{proposition}\label{shape}
Let $P$ be a simplex in $\mathbb{P}^n$. Let
$X_J=\mathbb{H}^n/\Gamma$ be a hyperbolic simplicial Coxeter orbifold given by a circular diagram $J$. Then there exists a
one-parameter family of representations
$\{\rho_t\}_{t\in\mathbb{R}_+}$ of $\Gamma$ into
$\PGL_{n+1}\mathbb{R}$, such that

(1) Each $\rho_t(\Gamma)$ is generated by projective reflections with
respect to faces of $P$.

(2) The map $\mathbb{R}_+\rightarrow \mathfrak{P}(X_J)$, $t\mapsto [\rho_t]$ is bijective.

(3) Let $\Omega_t=\bigcup_{\gamma\in\Gamma}\rho_t(P)\subset\mathbb{P}^n$ be the properly convex open set preserved by $\rho_t$. Then $\Omega_t$ converges to $P$ in Hausdorff topology when $t$
tends to $0$ or $+\infty$. 
\end{proposition}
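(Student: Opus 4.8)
The plan is to build the family $\{\rho_t\}$ explicitly using the Vinberg theory of projective reflection groups. Recall that a projective reflection with respect to a face of $P$ supported on a hyperplane $\{\alpha=0\}$ has the form $x\mapsto x-\alpha(x)v$, where $v$ is a vector with $\alpha(v)=2$, and is determined by the pair $(\alpha,v)$ up to scaling. Writing $P$ in $\mathbb{P}^n$ with facets $\{\alpha_i=0\}$, $i=0,\dots,n$, a projective reflection group generated by reflections in the facets of $P$ is encoded by the \emph{Cartan matrix} $A=(a_{ij})$, $a_{ij}=\alpha_j(v_i)$, subject to Vinberg's conditions: $a_{ii}=2$; $a_{ij}\le 0$ for $i\ne j$; $a_{ij}=0\Leftrightarrow a_{ji}=0$; and the Coxeter-type condition that for $i\ne j$ either $a_{ij}a_{ji}\ge 4$ or $a_{ij}a_{ji}=4\cos^2(\pi/m_{ij})$ matching the Coxeter label $m_{ij}$ of the diagram $J$. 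First I would fix the simplex $P$ and the geometric data $(\alpha_i,v_i)$ coming from the hyperbolic reflection group $\Gamma$, which corresponds to a symmetric Cartan matrix $A^0$; the hypothesis that $J$ has a loop is exactly what produces a one-parameter family of non-symmetric Cartan matrices $A^t$ with the same off-diagonal ``norms'' $a_{ij}^ta_{ji}^t=a_{ij}^0a_{ji}^0$ but varying along the cycle (this is the content behind Proposition \ref{goldman}, whose proof in Section \ref{section_moduli} I would invoke). Concretely, one multiplies the entries $a_{ij}$ around the loop by a parameter and its inverse so the product around the loop changes while each pairwise product is preserved.

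Next I would set $\rho_t$ to be the reflection group with Cartan matrix $A^t$. Point (1) is then immediate from the construction. For point (2), the parametrization $t\mapsto[\rho_t]$ lands in $\mathfrak{P}(X_J)$ by Vinberg's theorem (the Tits-type convexity: the group is discrete, preserves a properly convex domain, and $P$ tiles it), and bijectivity onto $\mathfrak{P}(X_J)\cong\mathbb{R}_+$ follows by comparing with the description of the moduli space in Proposition \ref{goldman} — the deformation parameter along the loop is precisely the coordinate on $\mathbb{R}_+$, after normalizing $t=1$ (say) to be the hyperbolic structure. For point (3), proper convexity of $\Omega_t$ is again Vinberg; the substantive claim is the Hausdorff convergence $\Omega_t\to P$ as $t\to 0$ or $t\to\infty$. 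Here I would argue as follows: $\Omega_t\subset P$ always contains $P$'s interior's image under no — rather, $P\subseteq\overline{\Omega_t}$ is false in general; instead one has $\Omega_t\supseteq \mathrm{int}(P)$ is also false. The correct picture is that each $\Omega_t$ is swept out by the $\rho_t(\Gamma)$-orbit of $P$, so $\Omega_t\supseteq P$, and the claim is that the ``extra'' tiles collapse. To make this precise I would track the reflected copies $s_i\cdot P$: the reflection $\rho_t(s_i)$ fixes the facet hyperplane $\{\alpha_i=0\}$ and sends the opposite vertex of $P$ to a point whose position is governed by the entries $a_{ij}^t$. As $t\to 0$ or $\infty$, the degenerating entries force these reflected vertices to converge back onto $\partial P$, hence every tile $\gamma\cdot P$ with $\gamma\neq e$ converges to a subset of $\partial P$, giving $\Omega_t\to P$.

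The main obstacle, and the step deserving the most care, is this last convergence statement: one must show uniform collapse of the \emph{entire} orbit $\rho_t(\Gamma)\cdot P$, not just the first layer of reflected tiles, since a priori tiles far out in the orbit could stay large even as nearby ones shrink. I would handle this by estimating the matrices $\rho_t(\gamma)$: because the off-diagonal products $a_{ij}^ta_{ji}^t$ are \emph{fixed}, the ``shape'' of each generator is controlled, and the degeneration is concentrated in a single multiplicative parameter $\lambda(t)\to 0$ (or $\infty$) distributed around the loop. One shows that conjugating $\rho_t$ by a suitable diagonal matrix $D_t=\diag(\lambda(t)^{k_0},\dots,\lambda(t)^{k_n})$ brings the generators to within $O(\lambda(t))$ (resp. $O(1/\lambda(t))$) of a limiting reducible representation that preserves a proper face structure, i.e. whose invariant convex set degenerates to (a face of) $P$; since Hausdorff convergence of $\Omega_t$ is invariant under applying $D_t$ only if $D_t$ stays bounded — which it does not — one instead argues directly that the support functions of $\Omega_t$, computed as suprema over the orbit of linear functionals evaluated against $\rho_t(\gamma)$-translates of $P$'s vertices, converge to those of $P$. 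This reduces everything to a geometric-series-type estimate on $\|\rho_t(\gamma)\|$ that is summable uniformly in $t$ near the two ends of the moduli space. Once that estimate is in place, $\overline{\Omega_t}=\overline{\bigcup_\gamma \rho_t(\gamma)P}\to P$ in the Hausdorff metric, completing (3).
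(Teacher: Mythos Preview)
Your setup for parts (1) and (2) is essentially the paper's: build an explicit one-parameter family of Vinberg-type matrices $A^t$ by pushing a parameter around the cycle of $J$, and read off the bijection with $\mathfrak{P}(X_J)$ from the cyclic-product invariant in the proof of Proposition~\ref{goldman}. That part is fine.

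For part (3), however, you are missing the key idea and replacing it with something much harder and not clearly workable. The paper does \emph{not} try to control the entire orbit $\rho_t(\Gamma)\cdot P$ tile by tile. Instead it proves a clean containment lemma (Lemma~\ref{bound}): for any $\mathbf{A}\in\mathcal{M}_J$, the Tits set $\Omega$ is contained in a simplex whose vertices are the isolated fixed points $f_0,\dots,f_n$ of the generating reflections. Once you have this, the convergence $\Omega_t\to P$ is a one-line computation: with the explicit family $\mathbf{A}_t$ the paper writes down, each $f_i(t)$ converges to a vertex $p_{i\pm 1}$ of $P$ as $t\to 0$ or $t\to\infty$, so the bounding simplex squeezes down onto $P$, and since $P\subset\Omega_t$ always, Hausdorff convergence follows by sandwiching.

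By contrast, your proposed route---uniform estimates on $\|\rho_t(\gamma)\|$ over all $\gamma\in\Gamma$, support-function computations, a ``geometric-series-type estimate'' summable uniformly in $t$---is not fleshed out enough to be a proof, and the obstacle you correctly identify (far-out tiles could in principle stay large) is exactly what makes this direct approach delicate. The conjugation-by-$D_t$ idea you float and then retract illustrates the problem: the natural rescalings do not respect the fixed simplex $P$, so they do not help with Hausdorff convergence in the given chart. The bounding-simplex lemma eliminates all of this: it converts a statement about an infinite orbit into a statement about $n+1$ points, whose limits you can read off the rows of $\mathbf{A}_t$.
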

See Figure \ref{limit} for a $2$-dimensional example.

\begin{figure}
\centering
\includegraphics[width=1.7	in]{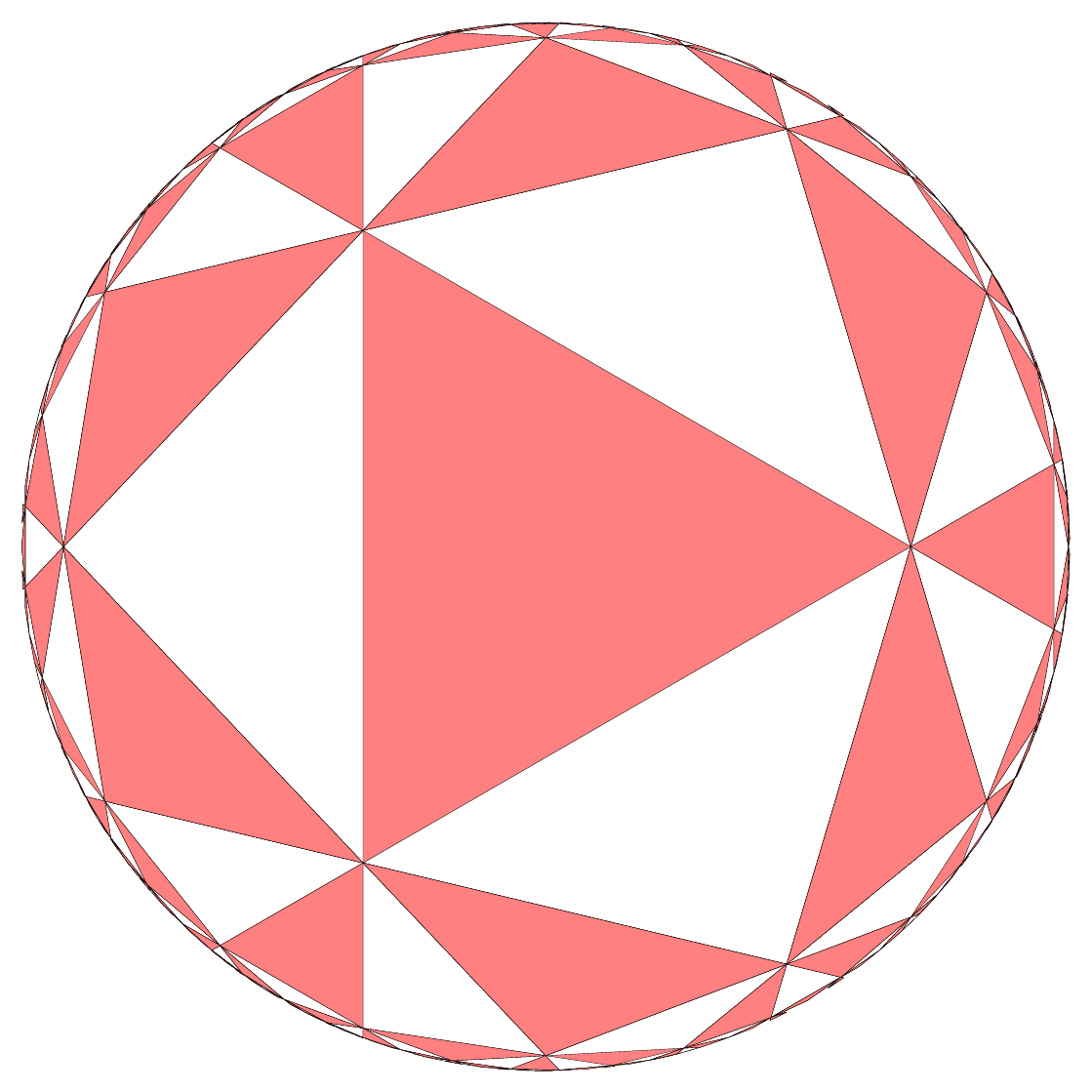}\\
\includegraphics[width=1.65in]{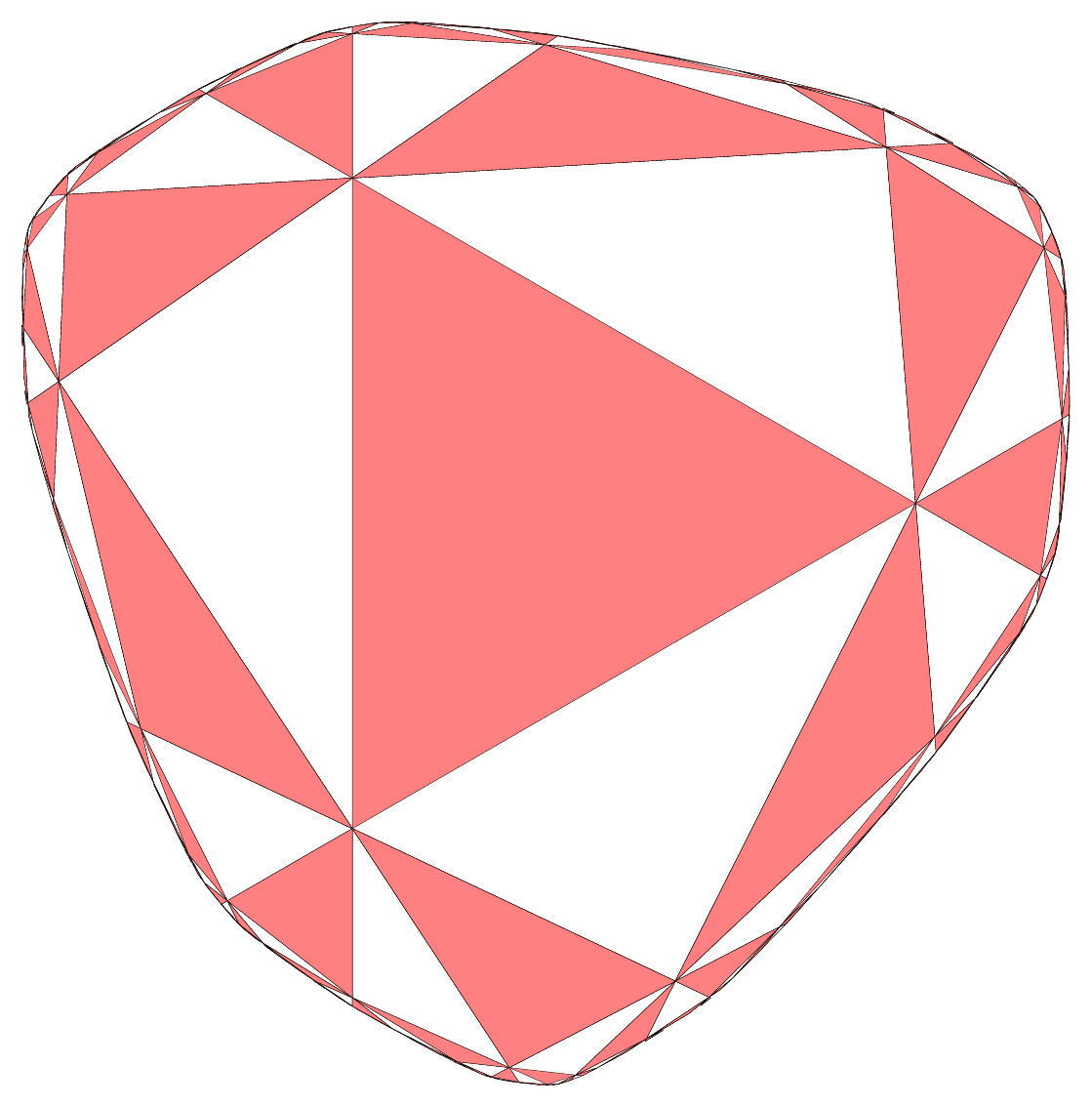}\hspace{0.5in}
\includegraphics[width=1.65in]{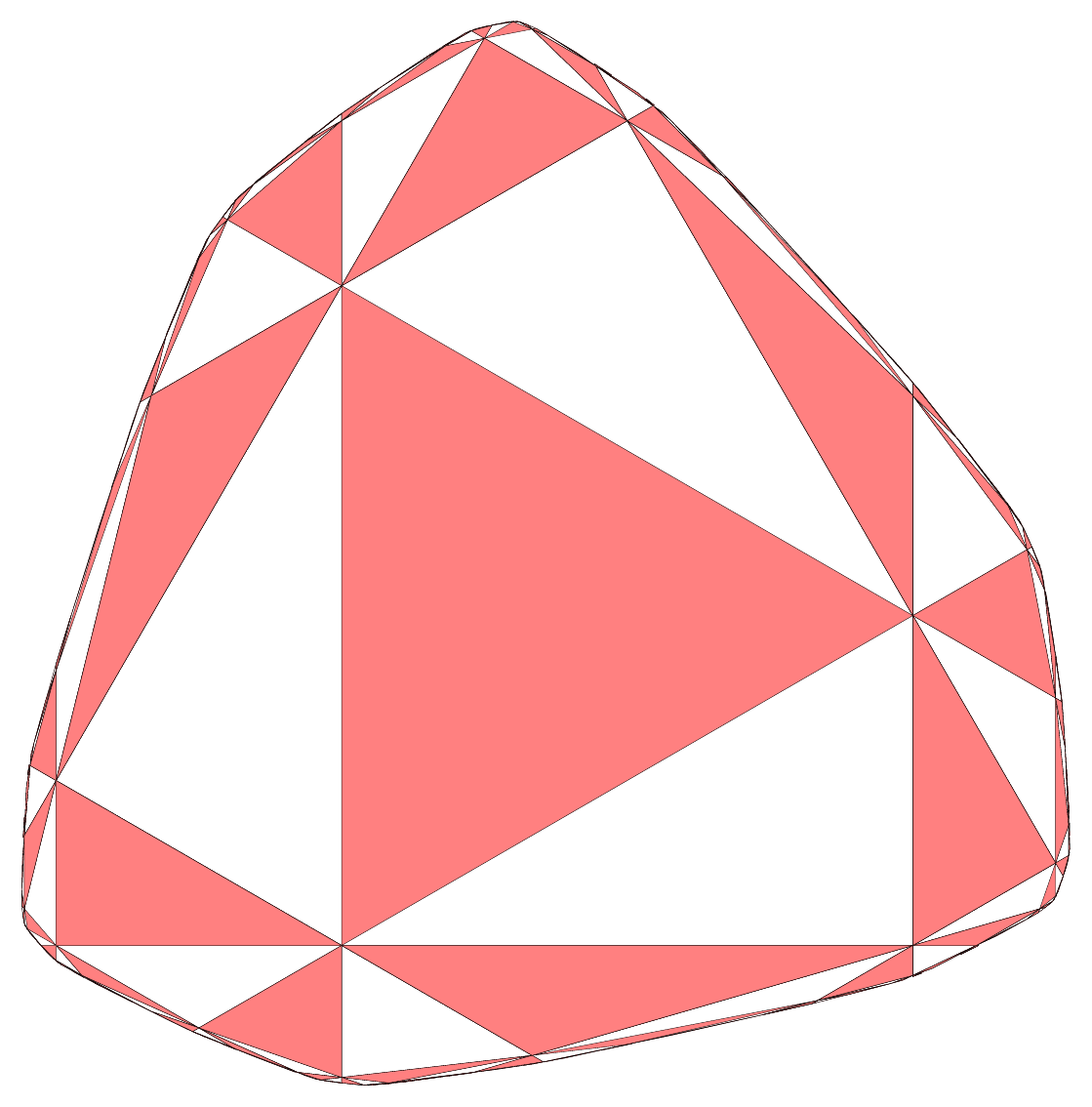}\\
\hspace{0.4in}
\includegraphics[width=1.55in]{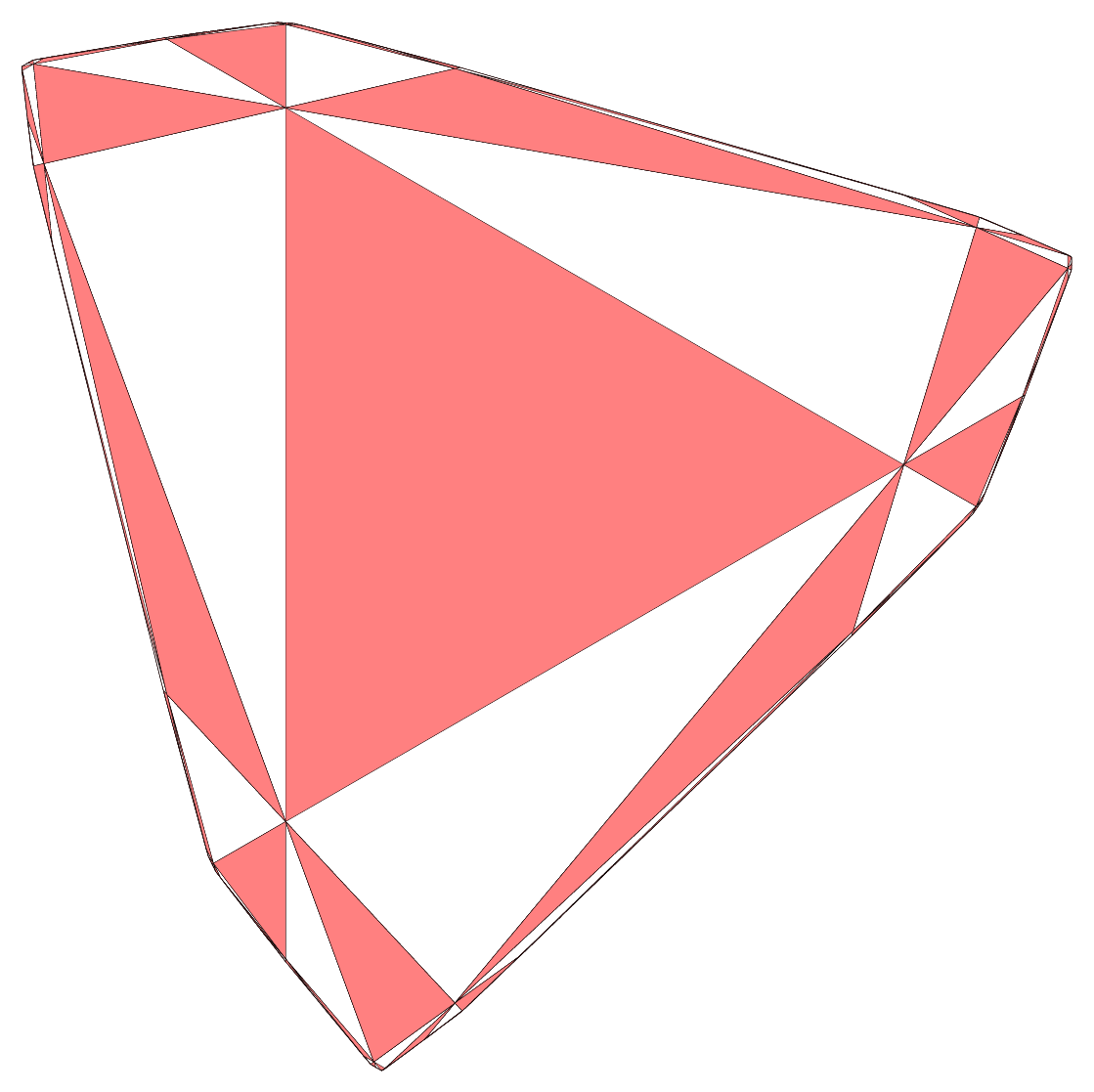}\hspace{0.85in}
\includegraphics[width=1.55in]{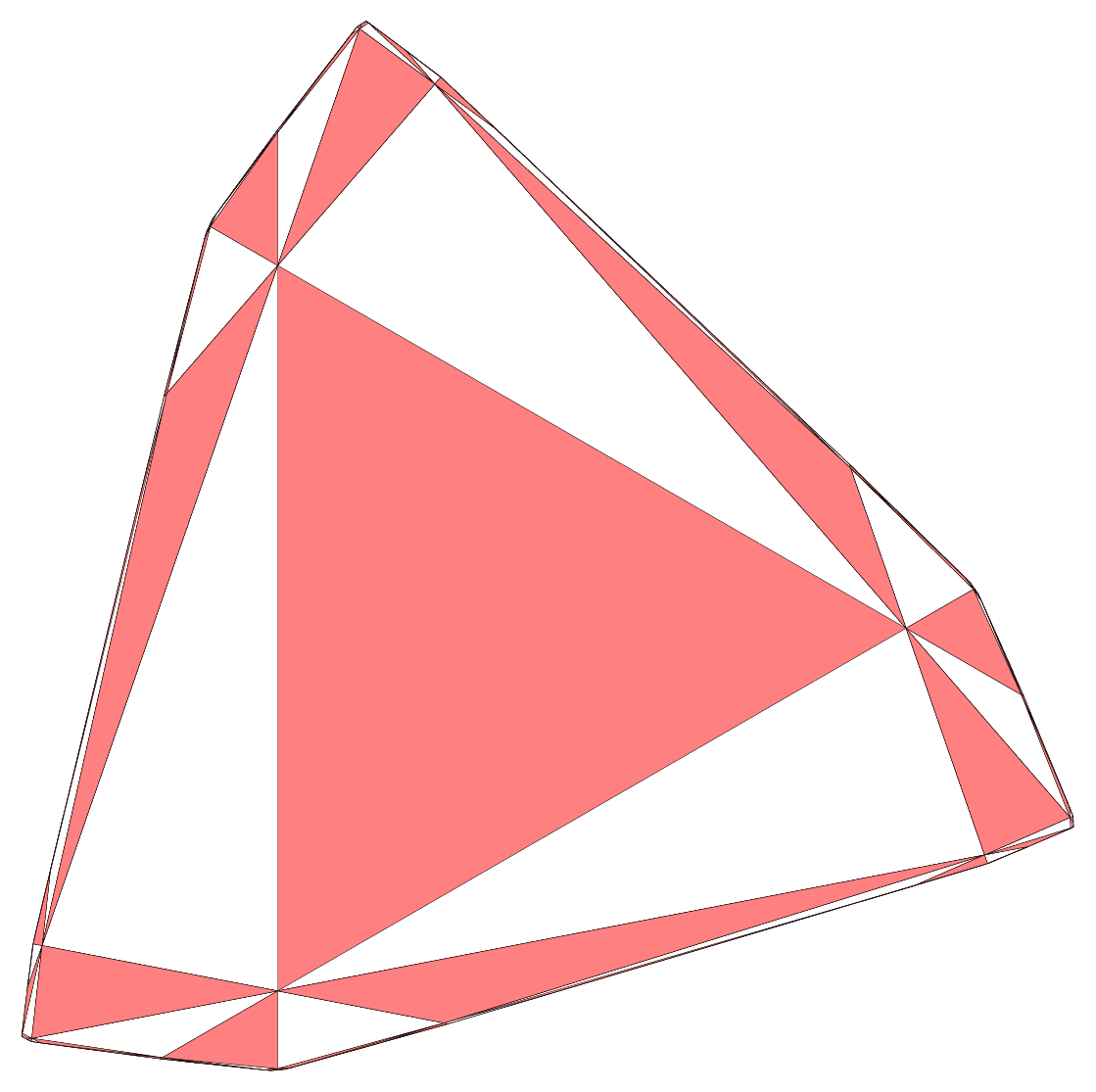}\\
\hspace{0.4in}
\includegraphics[width=1.1in]{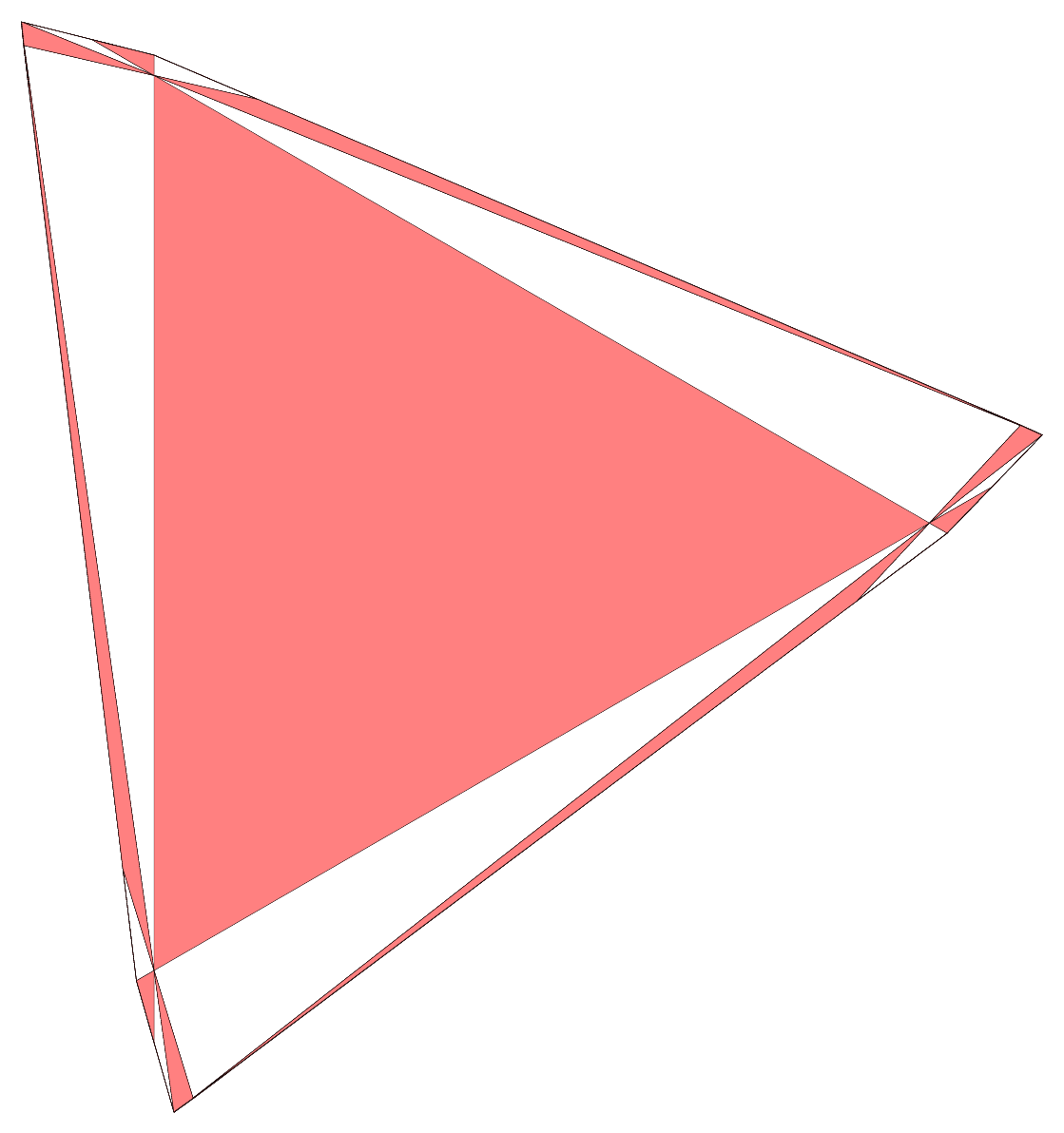}\hspace{1.6in}
\includegraphics[width=1.1in]{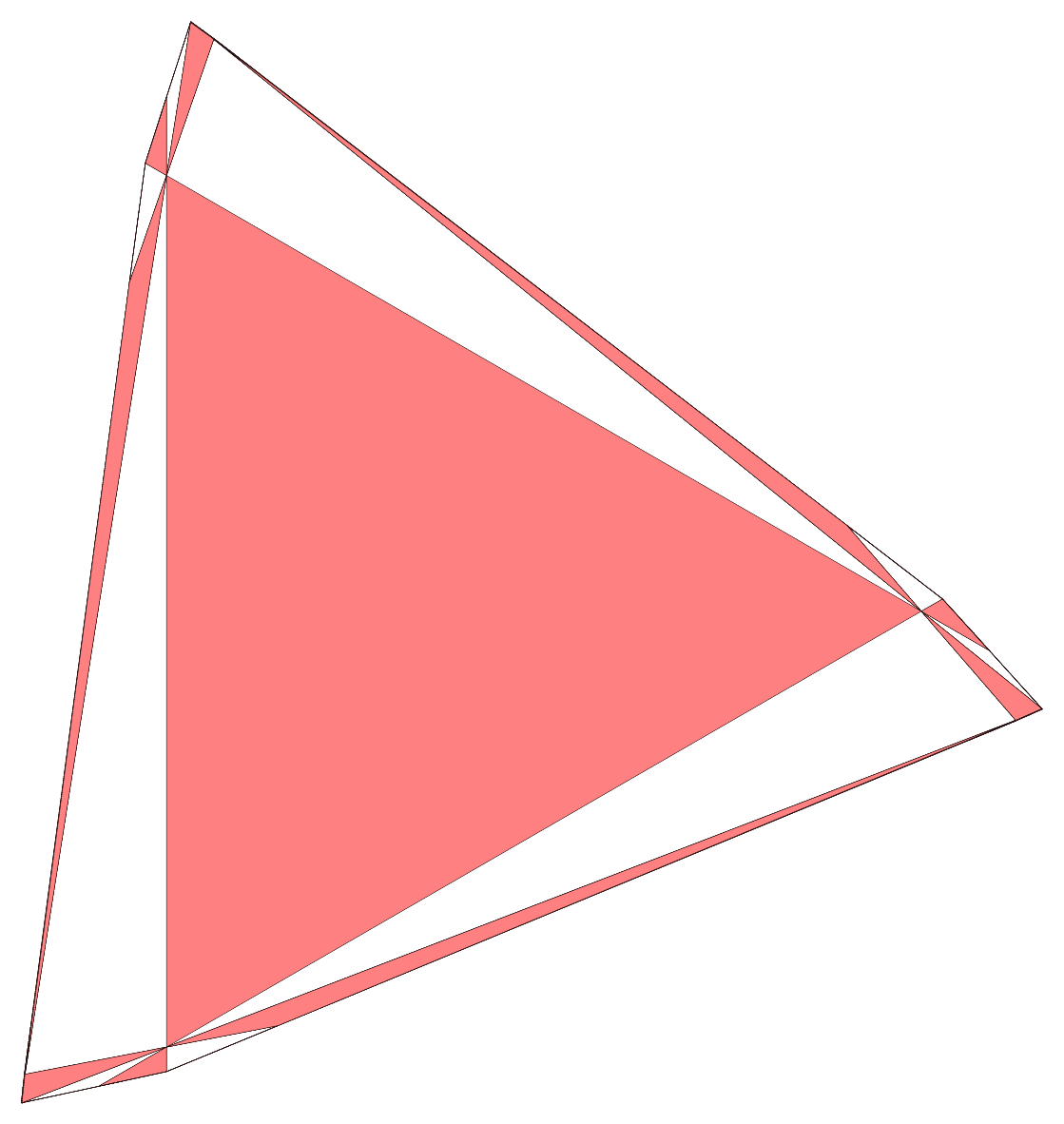}
\caption{Deformation of $\Omega_t$ when $t$ tends to $0$ and
$+\infty$.}\label{limit}
\end{figure}

Our main result concerns the metric geometry on the above family of
convex sets. Any properly convex open set
$\Omega\subset\mathbb{P}^n$ carries a canonical Finsler metric
$d_\Omega$, called the \emph{Hilbert metric}, which is invariant under projective transformations preserving $\Omega$. If $\Omega$ is an
 ellipsoid, then
$(\Omega,d_\Omega)$ is isometric to the real hyperbolic $n$-space
$\mathbb{H}^n$. 

Consider the convex set $\Omega_t$ from Proposition \ref{shape}. We denote its Hilbert metric by $d_t$. By projective invariance, $d_t$ induces a metric on $X_J$. From Proposition \ref{shape} we can
readily deduce some easy geometric properties of the family of metrics $\{d_t\}$. For
example, the diameter of $(X_J, d_t)$
tends to infinity as $t\rightarrow 0$ or $t\rightarrow+\infty$. The purpose of this paper is to study a more subtle quantity, the entropy.

\begin{definition}
Let $(\widetilde{X},d)$ be a metric space and $\Gamma$ be a group acting properly discontinuously on $\widetilde{X}$ by isometries. Given a base point $x_0\in\widetilde{X}$, the (exponential) growth rate of the orbit $\Gamma.x_0$ is defined as
$$
\delta(\widetilde{X}, d, \Gamma, x_0)=\varlimsup_{R\rightarrow +\infty}\frac{1}{R}  \log\#\big(\Gamma.x_0\cap
B(x_0, R)\big),
$$
where  $B(x_0,R)$ is the ball of radius $R$ centered at $x_0$.
\end{definition}
This notion originally arose from the case where $(\widetilde{X},d)$ is the universal covering of a compact non-positively curved Riemaniann manifold $X$ and $\Gamma=\pi_1(X)$. In this case the above growth rate is independent of the choice of $x_0$ and equals the topological entropy of the geodesic flow on the unit tangent bundle of $X$ \cite{manning}. This result easily generalizes to geodesic flows of compact convex projective manifolds endowed with the Hilbert metric (see \cite{crampon}). For this reason, we refer to the orbit growth rate as the \emph{entropy} and omit $x_0$ in the notation.

For any properly convex open set $\Omega\subset\mathbb{P}^n$ acted upon by a discrete group $\Gamma\subset\PGL_{n+1}\mathbb{R}$ with compact fundamental domain, M. Crampon \cite{crampon} proved that the entropy is bounded from above
$$\delta(\Omega,d_\Omega,\Gamma)\leq n-1$$
and the equality is achieved if and only if $\Omega$ is an ellipsoid. He then asked whether $\delta(\Omega,d_\Omega,\Gamma)$ has a lower bound.

Our main result gives a negative answer:
\begin{theorem}\label{thm1}
Let $X_J=\mathbb{H}^n/\Gamma$ be a hyperbolic simplicial Coxeter orbifold with
$J$ circular. Let
$\rho_t$ and $\Omega_t$ be given by Proposition
\ref{shape} and $d_t$ be the Hilbert metric on $\Omega_t$. Then
$$
\delta(\Omega_t, d_t,\Gamma)\rightarrow 0 \mbox{ as } t\rightarrow 0\mbox{ or }+\infty.
$$
\end{theorem}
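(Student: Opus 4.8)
The plan is to realise $\delta(\Omega_t,d_t)$ as the critical exponent of the Poincar\'e series $\sum_{\gamma\in\Gamma}e^{-s\,d_t(x_0,\rho_t(\gamma)x_0)}$ of the reflection group $\rho_t(\Gamma)$ acting on $(\Omega_t,d_t)$ — here $x_0$ is a fixed point in the interior of $P$, and $\rho_t(\Gamma)x_0$ is in bijection with $\Gamma$ because $P$ is a fundamental chamber — and to compare the displacement $\gamma\mapsto d_t(x_0,\rho_t(\gamma)x_0)$ with the word length $|\gamma|_S$ for the standard Coxeter generators $S=\{s_1,\dots,s_{n+1}\}$ (the reflections in the facets of $P$). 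The exponential growth rate $\omega=\omega(J):=\varlimsup_{N}\tfrac1N\log\#\{\gamma\in\Gamma:|\gamma|_S\le N\}$ is finite and depends only on the fixed diagram $J$; hence it suffices to produce a constant $C=C(J)$ and reals $c_t\to+\infty$ as $t\to0,+\infty$ with
\begin{equation}\label{eq:key}
d_t\bigl(x_0,\rho_t(\gamma)x_0\bigr)\ \ge\ c_t\,\bigl(|\gamma|_S-C\bigr)\qquad(\gamma\in\Gamma),
\end{equation}
since then $\sum_{\gamma}e^{-s\,d_t(x_0,\rho_t(\gamma)x_0)}\le e^{sc_tC}\sum_{\gamma}e^{-sc_t|\gamma|_S}$ converges as soon as $sc_t>\log\omega$, whence $\delta(\Omega_t,d_t)\le(\log\omega)/c_t\to0$.

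The case $|\gamma|_S=1$ of \eqref{eq:key} — a lower bound $c_t\to+\infty$ for the displacement of every generator — is immediate from Proposition~\ref{shape}(3). The chamber $\rho_t(s_i)P$ is the simplex spanned by $\overline{F_i}$ and the vertex $\rho_t(s_i)v_i$, where $F_i$ is the facet of $P$ fixed by $s_i$, lying in the hyperplane $M_i$, and $v_i$ is the opposite vertex; as $\rho_t(s_i)v_i$ lies on the far side of $M_i$ and inside $\overline{\Omega_t}$, which Hausdorff-converges to $P$, it must tend to $\overline{F_i}$, so $\rho_t(s_i)P$ is squeezed onto the facet $F_i$. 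Therefore $\rho_t(s_i)x_0$ stays at bounded affine distance from $x_0$ in a fixed chart while its affine distance to $\partial\Omega_t$ tends to $0$, and evaluating the defining cross-ratio of $d_t(x_0,\rho_t(s_i)x_0)$ along the segment $[x_0,\rho_t(s_i)x_0]$ gives $d_t(x_0,\rho_t(s_i)x_0)\ge-\tfrac12\log\varepsilon_t-O(1)$, where $\varepsilon_t$ is the Hausdorff distance from $\Omega_t$ to $P$. Set $c_t:=\min_i d_t(x_0,\rho_t(s_i)x_0)$; then $c_t\to+\infty$, and by $\Gamma$-invariance of $d_t$ the barycentres of any two chambers sharing a wall are at $d_t$-distance $\ge c_t$.

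The heart of the matter, and the step I expect to be the main obstacle, is promoting this to arbitrary $\gamma$: one must show that a $d_t$-geodesic — a line segment — from $x_0$ to $\rho_t(\gamma)x_0$ has $d_t$-length at least, up to a bounded error, $c_t$ times the number of walls separating $P$ from $\rho_t(\gamma)P$, a number equal to $|\gamma|_S$. The difficulty is that the segment can cross several of these walls ``cheaply'' by running near a codimension-$\ge2$ face of the tiling. Here I would use that such a face is the fixed locus of a finite parabolic subgroup $\Gamma_Q$, that exactly $|\Gamma_Q|\le W_0(J)$ chambers meet it, and that a segment meets each of these in a connected piece, so that only a bounded number of walls are crossed per ``face visit''; whereas a wall crossed away from all such faces is crossed transversally, at a point which — again by Proposition~\ref{shape}(3), since every wall is a $\Gamma$-translate of some $M_i$ and no wall meets the interior of $P$ — is squeezed into an $O(\varepsilon_t)$-neighbourhood of $\partial\Omega_t$, forcing a $d_t$-cost that tends to $+\infty$ with $-\tfrac12\log\varepsilon_t$. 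Making this precise — in particular controlling the contribution of the cheap near-face crossings by the fixed local data of the groups $\Gamma_Q$, and showing the transversal crossings genuinely cost a definite amount tending to infinity — and then assembling the estimate along the gallery yields \eqref{eq:key} and hence the theorem. (A more dynamical alternative would be to work with the geodesic flow on $X_J$, using that a generic geodesic spends most of its $d_t$-time in the part of $P$ pushed toward the vertices $v_j$, where $\partial\Omega_t$ is asymptotically modelled on a fixed convex shape determined by the local dihedral data while its $C^1$-regularity modulus degenerates as $t\to0,+\infty$, and feeding this into the refinement of Crampon's inequality $\delta\le n-1$ that bounds $\delta$ in terms of the regularity of $\partial\Omega$.)
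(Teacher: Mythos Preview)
Your reduction is sound and is, in essence, the paper's: the orbit growth
$\delta(\Omega_t,d_t)$ is controlled once one shows that, for a vertex (or
barycentre) $x_0$,
\[
d_t\bigl(x_0,\rho_t(\gamma)x_0\bigr)\ \ge\ \frac{1}{C'}\,c_t\,|\gamma|_S
\]
with $C'=C'(J)$ fixed and $c_t\to+\infty$.  The paper obtains exactly this,
taking $c_t=l(t)$ the minimal Hilbert edge-length of $P$, which tends to
$+\infty$ by Proposition~\ref{shape}.  So the disagreement is only about how
to prove \eqref{eq:key}.

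Your wall-crossing outline, however, is not a proof, and I do not see how to
make it one along the lines you indicate.  The assertion that a
``transversal'' wall-crossing forces a $d_t$-cost comparable to
$-\tfrac12\log\varepsilon_t$ is unsupported: after the very first wall the
segment already lives in the $\varepsilon_t$-collar of $\partial\Omega_t$, and
crossing further walls there need not cost anything like
$-\log\varepsilon_t$ per wall.  Likewise the bookkeeping for ``face visits'' is
circular: you bound the number of cheap walls by (number of face
visits)$\times W_0(J)$, but you still need that each face visit (going from
the neighbourhood of one codimension-$\ge2$ face to an adjacent one) costs at
least $c_t$ in $d_t$, which is precisely the kind of statement you are trying
to prove.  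In short, the whole difficulty of \eqref{eq:key} is to show that
nearby points in the Euclidean sense, lying deep in the collar, can still be
far in $d_t$; your sketch asserts this rather than proves it.

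The paper sidesteps the direct analysis of a single geodesic entirely.  It
introduces the intrinsic metrics $d_t^{(k)}$ on the $k$-skeleta of the Coxeter
tessellation of $\Omega_t$ and proves (Proposition~\ref{metric comparison})
that $d_t\le d_t^{(1)}\le C'\,d_t$ on vertices, with $C'$ depending only on
$J$ and \emph{not} on $t$.  Since $d_t^{(1)}\ge l(t)\,d_1$ where $d_1$ is the
combinatorial graph metric, this gives \eqref{eq:key} immediately with
$c_t=l(t)/C'$.  The bi-Lipschitz bound is proved by descending one skeleton at
a time, and the inductive step is a purely local statement about the
fundamental simplex $P$ (Proposition~\ref{main}): if $A,B$ are $k$-faces of
$P$ with $E=A\cap B$ a $(k-1)$-face, then $d_t(x,E)+d_t(y,E)\le C\,d_t(x,y)$
for all $x\in A$, $y\in B$ and all $t$.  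The proof of this uses the finite
vertex stabilisers to ``unfold'' $[x,y]$ around $E$ (your dihedral picture in
dimension $2$), together with a projection lemma in Hilbert geometry.  This
local-to-global, skeleton-by-skeleton argument is what replaces your
wall-crossing heuristic, and it is where the actual work lies.
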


The main ingredient in the proof of Theorem \ref{thm1} is the
following result.
\begin{lemma}\label{main}
There exists a constant $C$ depending only on $J$,
such that if $A$ and $B$ are two $k$-dimensional cells of the simplex $P$ and
$E=A\cap B$ is a $(k-1)$-dimensional cell, where $1\leq k\leq n-1$, then we have
$$
Cd_t(x,y)\geq d_t(x, E)+d_t(y, E)
$$
for any $x\in A$, $y\in B$ and $t\in\mathbb{R}_+$.
\end{lemma}

As another consequence of Lemma \ref{main}, we construct
families of convex projective structures on surfaces which answer Crampon's problem and have some other curious properties.

\begin{corollary}\label{surface}
On an oriented closed surface of genus $g\geq 2$, there
exists an one-parameter family of convex projective structures such that when the parameter goes to infinity, the entropy of Hilbert metric tends to
$0$, whereas the systole and constant of Gromov hyperbolicity
tends to $+\infty$.
\end{corollary}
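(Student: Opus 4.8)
The plan is to realise each $S_g$ as a finite-index torsion-free subgroup of one fixed planar reflection group and restrict the deformation of Proposition~\ref{shape}. Fix the $(4,4,4)$-triangle reflection group $W$: its Coxeter diagram is a triangle with all labels $4$, hence has a loop, so $\mathfrak{P}(X_W)\cong\mathbb{R}_+$ and Proposition~\ref{shape} gives $\{\rho_t\}_{t\in\mathbb{R}_+}$, $\{\Omega_t\}$, with $P$ a fundamental domain for the tiling of $\Omega_t$ by the $\rho_t(W)$-translates of $P$ and, by Vinberg's theory of projective reflection groups, $\mathrm{int}\,P\subset\Omega_t$. In the index-$2$ von Dyck subgroup $\Delta^+=\langle a,b,c\mid a^4=b^4=c^4=abc=1\rangle$, the epimorphism $\Delta^+\to Q_8$ given by $a\mapsto i$, $b\mapsto j$, $c\mapsto -k$ has torsion-free kernel $\Lambda$ (every torsion element of $\Delta^+$ is conjugate into one of $\langle a\rangle,\langle b\rangle,\langle c\rangle$, whose generators have order-$4$ image), of index $16$ in $W$; since $\chi^{\mathrm{orb}}(X_W)=\tfrac12(2-3(1-\tfrac14))=-\tfrac18$, the surface $\mathbb{H}^2/\Lambda$ is closed, orientable, of Euler characteristic $16\cdot(-\tfrac18)=-2$, i.e.\ $\cong S_2$. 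As $S_g$ is a degree-$(g-1)$ cover of $S_2$ for every $g\ge2$, this yields a torsion-free finite-index subgroup $\Sigma_g\le W$ with $\mathbb{H}^2/\Sigma_g\cong S_g$. Then $\{[\rho_t|_{\Sigma_g}]\}_{t\in\mathbb{R}_+}$, with associated convex set $\Omega_t$, is a non-constant one-parameter family of convex projective structures on the oriented surface $S_g=\Omega_t/\rho_t(\Sigma_g)$, which is tiled by $[W:\Sigma_g]$ copies of $P$.

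For the entropy: by the convex-projective form of Manning's theorem quoted above, the topological entropy of the geodesic flow of $(S_g,d_t)$ equals the volume entropy $\delta(\Omega_t,d_t)$; computing $\delta$ for $\rho_t(W)$ instead of $\rho_t(\Sigma_g)$ gives the same value (passing to a finite-index subgroup does not change $\delta$, by a counting argument using that $\delta$ is basepoint-independent), so it tends to $0$ as $t\to0$ or $+\infty$ by Theorem~\ref{thm1}.

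For the systole, let $\mu_t$ be the minimum of the three $d_t$-lengths of the edges of $P$; these edges lie in $\mathrm{int}\,\Omega_t$, since with all labels finite the vertices of $P$ are interior points of $\Omega_t$. I claim $\operatorname{sys}(S_g,d_t)\ge\mu_t/(2C)$ with $C$ as in Proposition~\ref{main}. A closed geodesic $c$ lifts to the axis $\tilde c\subset\Omega_t$ of some $\rho_t(\gamma)$, $\gamma\in\Sigma_g\setminus\{1\}$; as $\gamma$ has infinite order $\tilde c$ has infinite $d_t$-diameter, so it is not contained in the compact star $\St(u)$ of any tiling vertex $u$, and it crosses a $\rho_t(\gamma)$-periodic bi-infinite sequence of tile-edges $W_i$ at points $q_i\in W_i$ (if instead $\tilde c$ lay along a tile-edge it would lie along a mirror, a concatenation of tile-edges, and already $\ell_t(c)\ge\mu_t$). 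Consecutive $W_i,W_{i+1}$ meet at a vertex $u_i$ of the tile $T_i$ between them; since $\tilde c$ is not contained in any star, the $u_i$ cannot all coincide, so $u_{i-1}\ne u_i$ for some $i$, and then $u_{i-1},u_i$ are the two endpoints of the tile-edge $W_i$, whence $d_t(q_i,u_{i-1})+d_t(q_i,u_i)=d_t(u_{i-1},u_i)\ge\mu_t$. Applying Proposition~\ref{main} (in its $\rho_t(W)$-equivariant form) to $T_{i-1}$ and to $T_i$ gives $C\,d_t(q_{i-1},q_i)\ge d_t(q_i,u_{i-1})$ and $C\,d_t(q_i,q_{i+1})\ge d_t(q_i,u_i)$; since each of $d_t(q_{i-1},q_i)$, $d_t(q_i,q_{i+1})$ is at most one period length $\ell_t(c)$, adding these two inequalities yields $2C\,\ell_t(c)\ge\mu_t$. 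Finally $\mu_t\to\infty$ as $t\to0$ or $+\infty$: an edge $[v,v']$ of $P$ is a proper subsegment (with $v,v'$ in its relative interior) of the chord that the line through the corresponding face cuts out of $\overline{\Omega_t}$, and $\Omega_t\to P$ forces this chord to collapse onto $[v,v']$, making the cross-ratio computing $d_t(v,v')$ blow up. Hence $\operatorname{sys}(S_g,d_t)\to\infty$.

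For the Gromov hyperbolicity constant: $\Omega_t$ is divisible by the word-hyperbolic group $\pi_1(S_g)$, so by Benoist's theorem it is strictly convex and $(\Omega_t,d_t)$ is Gromov hyperbolic with some constant $\delta_t$. If $\delta_t$ stayed bounded along a sequence $t_n\to+\infty$ (or $\to0$), then, since $\mathrm{int}\,P\subset\Omega_{t_n}$ and $\Omega_{t_n}\to P$, the metrics $d_{t_n}$ would converge pointwise on $\mathrm{int}\,P$ to its Hilbert metric $d_P$, and, the four-point condition defining hyperbolicity being stable under pointwise limits, the open triangle $P$ with $d_P$ would be Gromov hyperbolic — impossible, since $(\mathrm{int}\,P,d_P)$ is isometric to a plane with a hexagonal norm. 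So $\delta_t\to+\infty$. The crux of the whole argument is the systole estimate: converting Proposition~\ref{main}, a statement about the fixed simplex $P$, into a uniform lower bound for closed geodesics of $S_g$, which rests on controlling the $\rho_t(W)$-tiling of $\Omega_t$ — that its vertices are interior points and that adjacent ones are pushed to $d_t$-distance tending to $\infty$ as $\Omega_t$ collapses onto $P$.
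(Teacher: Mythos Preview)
Your proof is correct and follows the paper's overall scheme---realise $S_g$ as a finite torsion-free cover of a hyperbolic triangle orbifold whose Coxeter diagram is a loop, restrict the family $\rho_t$, and then invoke Theorem~\ref{thm1} and Proposition~\ref{main}---but the execution differs in all three geometric estimates.

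For the surface group you use the $(4,4,4)$ group and the epimorphism $\Delta^+\to Q_8$; the paper instead exhibits by hand a genus-$2$ fundamental $10$-gon inside the $(5,5,5)$ tessellation. Both are standard; yours is shorter, the paper's is pictorial.

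For the Gromov hyperbolicity constant, the paper stays entirely within Proposition~\ref{main}: applying it to each pair of sides of $P$ shows that every triangle inscribed in $P$ has $d_t$-perimeter at least $\tfrac{1}{C}\cdot\mathrm{perim}_t(P)$, so the ``size'' (in the inscribed-triangle definition the paper adopts) of the single geodesic triangle $P\subset\Omega_t$ already tends to $+\infty$. Your limiting argument via $d_t\to d_P$ on $\mathrm{int}\,P$ and the four-point condition is valid and conceptually clean, but it controls a different (equivalent up to multiplicative constants) hyperbolicity constant than the one the paper defines, so strictly you should invoke that equivalence.

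For the systole, the paper observes that the orbifold image of a closed geodesic is a closed billiard trajectory in $P$ hitting all three sides, whose length therefore dominates the minimal inscribed-triangle perimeter---reusing the very same bound just obtained. Your edge-crossing argument applies Proposition~\ref{main} directly in two adjacent tiles to get $\operatorname{sys}\ge\mu_t/(2C)$. This is a genuinely different and more hands-on route; the paper's approach has the virtue of unifying the systole and hyperbolicity estimates into a single inscribed-triangle inequality, while yours separates them and handles hyperbolicity by a soft limit argument independent of Proposition~\ref{main}.
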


Recall that for a metrized manifold $(X,d)$, the \emph{systole} is defined as the infimum
of lengths of homotopically non-trivial closed curves on $X$. Let $\widetilde{X}$ be the universal covering of $X$, then the
\emph{constant of Gromov hyperbolicity} is defined to be the
supremum of sizes of geodesic triangles in $\widetilde{X}$. Here the
``size" of a geodesic triangle $\Delta$ is the minimal
perimeter of all geodesic triangles inscribed in $\Delta$ (\textit{c.f.} \cite{ghys}).

The paper is organized as follows. After recalling 
some backgrounds about reflection groups in Section \ref{section_pre}, we prove Proposition \ref{goldman}
and \ref{shape} in Section \ref{section_moduli}. In Section \ref{section_metric} we prove Theorem \ref{thm1} and Corollary \ref{surface} assuming
Lemma \ref{main}. Finally we prove Lemma \ref{main} in
Section \ref{section_proof}.

\subsection*{Acknowledgements} 
This paper was written when the author was a Ph.D student at Universit\'e Pierre et Marie Curie. The author is grateful to Gilles Courtois for his guidance and to Micka\"el Crampon for asking the question which we deal here.

\section{Preliminaries}\label{section_pre}

In this section we recall some well-known facts about reflection
groups and Tits set. See \cite{benoist_five, vinberg} for details.

A projective transformation $s\in\PGL_{n+1}\mathbb{R}$ is called a  \emph{reflection} if it is conjugate to $\pm\diag
(-1,1,\cdots,1)$. The fixed point set of $s$ is $\Fix(s)=F\sqcup f$ for some hyperplan $F\subset\mathbb{P}^n$ and 
some point $f\notin F$. Reflections
are in one-to-one correspondence with pairs $(f,F)$ with $f\notin F$.

Let $P$ be a $n$-dimesnional simplex in $\mathbb{P}^n$ with faces
$P_i$, where $i=0,\cdots,n$. We choose a
reflection $s_i$ with respect to each $P_i$. We are interested in
the group $\Gamma\subset\PGL_{n+1}\mathbb{R}$ generated by the
$s_i$'s and call it a \emph{simplicial reflection
group}, and call $P$ the \emph{fundamental simplex}. 
Since simplices in $\mathbb{P}^n$ are
conjugate to each other by projective transformations, when studying $\Gamma$ up to conjugacy, 
we can assume 
$$
P=\{[x_0:\cdots:x_n]\in\mathbb{P}^n\mid x_i\geq 0, \forall i\}
$$ 
with faces $P_i=\{[x_0:\cdots:x_n]\mid x_i=0,x_k\geq 0,
\forall k\neq i \}$, so that $\Gamma$ is determined by  $f_1,\cdots, f_n\in\mathbb{P}^n$, where $f_i\notin P_i$ is a fixed point of $s_i$. Suppose
$f_i=[a_{0i}:\cdots:a_{ni}]$. We can
assume $a_{ii}=1$ since $f_i\notin P_i$. We record these
$f_i$'s by the matrix $\A=(a_{ij})$ with $1$ on diagonals and denote the resulting reflection group by $\Gamma_\A$.

Let $H^+\subset \PGL_{n+1}\mathbb{R}$ be the subgroup consisting of positive diagonal matrices $\lambda=\diag(\lambda_0,\cdots, \lambda_n)$, $\lambda_i>0$. So $H^+$ is the identity component of the stabilizer of $P$. Given $\lambda\in H^+$ and a reflection group $\Gamma_\A$ as above, the conjugate $\lambda\Gamma_\A\lambda^{-1}$ is just $\Gamma_{\lambda\A\lambda^{-1}}$.  Put 
$$
\mathcal{M}_{n+1}=\{\A=(a_{ij})\in\mathbb{R}^{(n+1)\times(n+1)}\mid a_{ii}=1\mbox{ for any }i\}.
$$
The quotient $\mathcal{M}_{n+1}/H^+$ by conjugation action  is the moduli space of simplicial reflection groups. We now proceed to discuss discreteness of such groups.

By a \emph{Coxeter diagram with $n$ nodes} we mean a collection of integers $J=(m_{ij})$, where $i,j\in\{0,\cdots, n\}$ are distinct, such that $2\leq m_{ij}\leq \infty$. Note that $m_{ij}=\infty$ is allowed. $J$ is a ``diagram" because we view it as a graph with

\begin{itemize}
\item
$n$ nodes labelled by $0,1,\cdots, n$,
\item 
at most one weigted edge joining any two nodes $i$ and $j$: no edge if $m_{ij}=2$ and an edge of  weight $m_{ij}$ if $m_{ij}\geq 3$.
\end{itemize}

The Coxeter diagram $J=(m_{ij})$ determines an
abstract Coxeter group $W_J$ given by the presentation
$$W_J=\langle\,\tau_0,\cdots,\tau_n\mid(\tau_i\tau_j)^{m_{ij}}=\tau_i^2=1,\, \forall\, i\neq j
\,\rangle.$$
Note that $(\tau_i\tau_j)^\infty=1$ means $\tau_i\tau_j$ has infinite order.

The \emph{Cartan matrix} of $J$, denoted by $\mathbf{C}_J$, is
defined to be the symmetric matrix whose diagonal entries are $1$
and the $(i,j)$-entry is $-\cos(\pi/m_{ij})$ if $i\neq j$.

We have the following sufficient condition on $\A\in\mathcal{M}_{n+1}$ in order that $\Gamma_\A$ is discrete. This is a special case of Theorem 1.5 in \cite{benoist_five}.
\begin{theorem*}[Tits, Vinberg]
Let $\A=(a_{ij})\in\mathcal{M}_{n+1}$. Let $P^\circ$ be the interior of the fundamental simplex $P$. The translates $\gamma(P^\circ)$, $\gamma\in\Gamma_\A$ are pairwise disjoint if and only if there is a Coxeter diagram
$J=(m_{ij})$ such that $\A$ satisfies
the following condition, which we call Condition (J):
\begin{align*}\tag{J}\label{condj}
\begin{cases}
a_{ij}=a_{ji}=0 &\mbox{if } \,m_{ij}=2\\
a_{ij}<0 \mbox{ and } a_{ij}a_{ji}=\cos ^2(\pi/m_{ij})&\mbox{if }\, 3\leq m_{ij}<
\infty\\
a_{ij}<0 \mbox{ and }a_{ij}a_{ji}\geq 1 &\mbox{if }\, m_{ij}=\infty
\end{cases}
\end{align*}

Furthermore, when Condition (\ref{condj}) is satisfied, the following assertions hold:

(1) $\rho_\A: \tau_i\mapsto s_i$ $(0\leq i\leq n)$ is an isomorphism
from $W_J$ to $\Gamma_\A$. Here $s_i\in\PGL_{n+1}\mathbb{R}$ is the reflection fixing the face $P_i$ of $P$ and the point $f_i$ whose coordinates are given by the $i^\mathrm{th}$ column of $\A$.

(2) The set $\Omega_\A=\cup_{\gamma\in\Gamma_\A}\gamma(P)$, called the
\emph{Tits set}, is either the whole $\mathbb{P}^n$ (this occurs if and only if $W_J$ is finite) or a convex
subset in some affine chart of $\mathbb{P}^n$. $\Gamma_\A$ acts properly
discontinuously on $\Omega$.

(3) $\Omega_\A$ is open if and only if the stabilizer in $\Gamma_\A$ of each vertex of
$P$ is finite.
\end{theorem*}
We let $\mathcal{M}_J\subset\mathcal{M}_{n+1}$ denote the set of matrices
satisfying Condition $(\ref{condj})$, which is preserved by the $H^+$-action.

There are only a few choices of $J$ such that $\Omega_\A$ (where $\A\in\mathcal{M}_J$) is open and is not the whole $\mathbb{P}^n$, hence $\Omega_\A/\Gamma_\A$ is a convex projective orbifold. Indeed, $\Omega_\A\neq\mathbb{P}^n$ and $\Omega_\A$ being open are respectively equivalent to following two constraints on the Cartan matrix $\mathbf{C}_J$:
\begin{itemize}
\item $\mathbf{C}_J$ is not positively definite.
\item Every proper principle submatrix of $\mathbf{C}_J$ is
positively definite.
\end{itemize}
Such $\mathbf{C}_J$'s are completely classified. The corresponding $J$'s are divided into 
classes (\textit{c.f.} \cite{vinberg}):

\textbf{Euclidean Coxeter diagrams (\textit{i.e.} $\mathbf{C}_J$ is degenerate):} In this case $\mathbf{C}_J$ has corank
$1$ and there is a faithful representation $\rho_0:W_J\rightarrow
\Isom(\mathbb{E}^n)$ which realize $W_J$ as an Euclidean simplicial
reflection group. All Euclidean Coxeter diagrams are enumerated by Coxeter himself. We conjecture that in this case the Tits set $\Omega_\A$  ($\A\in\mathcal{M}_J$) is either an open simplex containing $P$ or an affine chart.

\textbf{Hyperbolic Coxeter diagrams (\textit{i.e.} $\mathbf{C}_J$ is non-degenerate):} In this
case $\mathbf{C}_J$ has signature $(1,n)$ and there is a faithful
representation $\rho_0:W_J\rightarrow \Isom(\mathbb{H}^n)$ which
realize $W_J$ as a hyperbolic simplicial reflection group. F. Lann\'er  \cite{flanner}
enumerated all hyperbolic Coxeter diagrams as in Figure \ref{lanner} above
(\textit{c.f.} \cite{vinberg}). Note that they exist only for dimension $n\leq 4$. Since $W_J$ is a word-hyperbolic group,  a theorem of Benoist \cite{benoist_1} says that $\Omega_\A$  ($\A\in\mathcal{M}_J$) is
strictly convex. These $\Omega_\A$'s are our main concern in the following sections. Historically,  they provide the first ``non-trivial" example of convex projective structures \cite{kac-vinberg}. See Figure \ref{limit} for some $2$-dimensional examples.

\section{Deformation of simplicial Tits sets}\label{section_moduli}
We fix a hyperbolic Coxeter diagram $J$ and consider $W_J\subset\Isom(\mathbb{H}^n)$ as a hyperbolic reflection group with fundamental simplex $P\subset\mathbb{H}^n$. Put $X_J=\mathbb{H}^n/W_J$. The goal of this section is to prove Proposition \ref{goldman} and Proposition \ref{shape}.

The hyperbolic $n$-space $\mathbb{H}^n$ is a ball in $\mathbb{P}^n$ (the Klein-Beltrami model). Let $P_0,\cdots, P_n$ be the faces of $P$ and $L_i$ be the
hyperplane in $\mathbb{P}^n$ containing $P_i$. Consider a faithful
representation $\rho:W_J\rightarrow\PGL_{n+1}\mathbb{R}$ which
defines a convex projective structure, \textit{i.e.} there is some convex open set
$\Omega_\rho$ and a $\rho$-equivariant homeomorphism $\Phi:\mathbb{H}^n\rightarrow
\Omega_\rho$. Since $\rho(\tau_i)$ has order $2$, its fixed point set in
$\mathbb{P}^n$ is the disjoint union of a $k$-dimensional subspace
and a $(n-k)$-dimensional subspace. On the other hand, $\rho(\tau_i)$ fixes
pointwisely $\Phi(L_i)$, a $(n-1)$-dimensional submanifold of
$\Omega_\rho$, so we conclude that $\Phi(L_i)$ is a hyperplan and $\rho(\tau_i)$ is a
reflection. $\rho(W_J)$ is thus a simplicial projective reflection group. But the discussions in the previous section implies that such groups, up to conjugacy, correspond to matrices $
\A\in\mathcal{M}_J$ up to conjugation by $H^+$. Therefore we get an identification
$$
\mathfrak{P}(X_J)=\mathcal{M}_J/H^+.
$$

%

We are to determined the latter quotient. To this end, for any $(n+1)\times(n+1)$ matrix $\A=(a_{ij})$ and a cyclicly ordered set of
indices $I=(i_1, \cdots, i_k)$, $i_1,\cdots, i_k\in\{0,\cdots, n\}$, we put
$$
\cyp{\A}{I}=a_{i_1i_2}a_{i_2i_3}\cdots a_{i_{k-1}i_k}a_{i_ki_1}.
$$
In particular, $\cyp{\A}{i}=a_{ii}$, $\cyp{\A}{i,j}=a_{ij}a_{ji}$.

\begin{lemma}\label{lemma1} 
Let $\mathcal{M}_{n+1}$ be the set of $(n+1)\times(n+1)$ real matrices with $1$ on diagonals and put
$$
\mathcal{M}_{n+1}^\circ=\{\A=(a_{ij})\in\mathcal{M}_{n+1}\mid \mbox{ for any } i\neq j,\, a_{ij}=0\mbox{ if and only if }a_{ji}=0\}.
$$
Then for any $\A, \mathbf{B}\in\mathcal{M}_{n+1}^\circ$, $\A=\lambda\mathbf{B}\lambda^{-1}$ for some positive diagonal matrix $\lambda=\diag(\lambda_0,\cdots,\lambda_n)$, $\lambda_i>0$ if and only if 
$
\A_I=\mathbf{B}_I
$
for any $I=(i_1, \cdots, i_k)$ with $|I|=k\geq 2$. 
%
%
\end{lemma}
\begin{proof} 
The ``only if" part is elementary and we only treat the ``if" part.
We say that $\A$ is \emph{irreducible} if
it cannot be brought into block-diagonal form by a permutation of basis. 
$\cyp{\A}{ij}=\cyp{\mathbf{B}}{ij}$ implies that $a_{ij}=0$ if only if $b_{ij}=0$. Therefore, after a permutation of basis if necessary,
we can assume that $\A$ and $\mathbf{B}$ are both
block-diagonal with irreducible blocks and that the $r^\mathrm{th}$ block of
$\A$ has the same size with the $r^\mathrm{th}$ block of
$\mathbf{B}$. $\A$ and $\mathbf{B}$ are conjugate
through a diagonal matrix if and only if their blocks are, so we can
assume that $\A$ and $\mathbf{B}$ are irreducible.

We look for $\lambda_1,\cdots,\lambda_n>0$ such that
$\lambda_ia_{ij}\lambda_j^{-1}=b_{ij}$, or equivalently,
\begin{equation}\label{conjugation}
\frac{\lambda_i}{\lambda_j}=\frac{b_{ij}}{a_{ij}}\mbox{  for all  }
i\neq j \mbox{  such that  } a_{ij}\neq 0
\end{equation}

Put $\lambda_1=1$. Irreducibility implies that for each
$i\in \{1, 2, \cdots, n\}$ there is sequence of distinct indices
$1, i_1, i_2, \cdots, i_k, i$, such that $a_{1i_1}$, $a_{i_1i_2}$,$
\cdots$, $a_{i_{k-1}i_k}$, $a_{i_ki}$ are all non-zero. Thus we 
set
\begin{equation}\label{lemma1a}
\lambda_i=\frac{\lambda_i}{\lambda_{i_k}}\frac{\lambda_{i_k}}{\lambda_{i_{k-1}}}\cdots
\frac{\lambda_{i_1}}{\lambda_1}=\frac{b_{ii_k}}{a_{ii_k}}\frac{b_{i_ki_{k-1}}}{a_{i_ki_{k-1}}}\cdots
\frac{b_{i_11}}{a_{i_11}}.
\end{equation}

Let us check that $\lambda_i$ does not depend on the choice of the sequence of indices, namely,  we have
\begin{equation}\label{lemma1b}
\frac{b_{ii_k}}{a_{ii_k}}\frac{b_{i_ki_{k-1}}}{a_{i_ki_{k-1}}}\cdots
\frac{b_{i_11}}{a_{i_11}}=
\frac{b_{ij_m}}{a_{ij_m}}\frac{b_{j_mj_{m-1}}}{a_{j_mj_{m-1}}}\cdots
\frac{b_{j_11}}{a_{j_11}}
\end{equation}
for another sequence $1, j_1, j_2, \cdots, j_m,
i$. Using the hypothesis
$$
\cyp{\A}{i,j}=a_{ij}a_{ji}=b_{ij}b_{ji}=\cyp{\mathbf{B}}{i,j},
$$
we can write the right-hand side of Eq.(\ref{lemma1b}) as
$$
\frac{a_{j_mi}}{b_{j_mi}}\frac{a_{j_{m-1}j_m}}{b_{j_{m-1}j_m}}\cdots
\frac{a_{1j_1}}{b_{1j_1}}.
$$
This coincides the left-hand side because of the equality $\cyp{\A}{I}=\cyp{\mathbf{B}}{I}$ for $I=(1,j_1,\cdots, j_m, i, i_k, i_{k-1},\cdots, i_1)$. A similar equality show that the $\lambda_i$ defined by Eq.(\ref{lemma1a})
$\lambda_i$'s satisfy (\ref{conjugation}).
\end{proof}

\begin{proof}[Proof of Proposition \ref{goldman}] $J=(m_{ij})$ is not circular if and only if  
 $\cyp{\A}{I}=0$ for any $\A\in\mathcal{M}_J$ and $|I|\geq 3$. But we also have $\cyp{\A}{i,j}=a_{ij}a_{ji}=\cos^2(\pi/m_{ij})$. Hence, for a given $I$ with $|I|\geq 2$, $\cyp{\A}{I}$ is the same for any $\A\in\mathcal{M}_J$. By Lemma \ref{lemma1}, any $\A\in\mathcal{M}_J$ are conjugate to each other through $H^+$. 

If $J$ is circular, then any $\A\in\mathcal{M}_J$ looks like the following one (where $n=4$):
$$\A=\left(
\begin{array}{ccccc}
1&a_{01}&0&0&a_{04}\\
a_{10}&1&a_{12}&0&0\\
0&a_{21}&1&a_{23}&0\\
0&0&a_{32}&1&a_{34}\\
a_{40}&0&0&a_{43}&1
\end{array}
\right).$$

Again, given $i$ and $j$, $\cyp{\A}{i,j}$ is the same for any $\A$. The only two non-zero $\cyp{\A}{I}$'s for $|I|\geq 3$ are
$$\cyp{\A}{0,1,\cdots, n}=a_{01}\cdots
a_{n-1,n}a_{n0}\,,\quad\cyp{\A}{n,n-1,\cdots,0}=a_{n,n-1}\cdots a_{10}a_{0n}.$$ 
They determine each other because the product is a constant
$$
\cyp{\A}{0,1,\cdots, n}\cdot\cyp{\A}{n,n-1,\cdots, 0}=\cos^2(\frac{\pi}{m_{01}})\cos^2(\frac{\pi}{m_{12}})\cdots
\cos^2(\frac{\pi}{m_{n0}}). 
$$

Therefore, Lemma \ref{lemma1} implies that $\A\in\mathcal{M}_J$ is determined up to $H^+$-conjugacy by $\cyp{\A}{0,1,\cdots,n}$, which is always positive (resp. negative) if $n$ is odd (resp.
even). Thus we get a homeomorphism
$$
\begin{array}{rcl}
\mathfrak{P}(X_J)=\mathcal{M}_J/H^+&\rightarrow&\mathbb{R}_+\\
\left[\A\right] &\mapsto& |\cyp{\A}{0,1,\cdots,n}|\\
\end{array}
$$
\end{proof}

In order to study how the Tits set deforms when $[\A]$ goes
to $0$ or $+\infty$ in $\mathfrak{P}(X_J)$, we need the follow
lemma, which bounds the Tits set by a simplex. 
\begin{lemma}\label{bound}
Let $J$ be a circular hyperbolic Coxeter diagram and take $\A\in\mathcal{M}_J$.
Let $f_i\in \mathbb{P}^n$ be the point with coordinates
 given by the $i^\mathrm{th}$ column of $\A$.
 Then there is a simplices with vertices
$f_0,\cdots,f_n$ which contains the Tits set $\Omega_\mathbf{A}$.
\end{lemma}
\begin{proof}
Let $L_i$ be the hyperplane of $\mathbb{P}^n$ spanned by
$f_0,\cdots,f_{i-1},f_{i+1},\cdots,f_n$ and $H_i=\{[x_0,\cdots, x_n]\mid x_i=0\}$ be the hyperplane containing the face $P_i$. Assume by contradiction
that $\Omega=\Omega_\A$ is not contained in any simplex with vertices
$f_0,\cdots,f_n$, or equivalently, $\Omega$ meets some $L_i$, say,  $L_0$. 

Recall that $\Omega$ is preserve by the group $\Gamma_\A$, which is in turn generated by $s_0,\cdots, s_n$, where $s_i$ is the reflection with fixed points $\Fix(s_i)=f_i\sqcup H_i$. In general, a reflection $s$ with $\Fix(s)=f\sqcup H$ stabilizes any projective subspace containing $f$. It follows that $L_0$ is stabilized by the subgroup $\Gamma_0\subset\Gamma_\A$ generated by $s_1,\cdots, s_n$. $\Gamma_0$ is a finite group because $J$ is hyperbolic. 

We claim that the vertex $p_0=[1:0,\cdots,0]$ of $P$ is not in $L_0$. Indeed, on one hand, since $J$ is circular, each column of $\A\in\mathcal{M}_J$ has at least two non-zero off-diagonal entries. In other words, each $f_i$ lies outside at least two $H_j$'s, thus
$$
\Fix(\Gamma_0)=(f_1\cup H_1)\cap\cdots\cap(f_n\cup H_n)=H_1\cap\cdots\cap H_n=\{p_0\}.
$$
\textit{i.e.} $p_0$ is the only fixed point of $\Gamma_0$. On the other hand, $\Gamma_0$ preserve the affine chart $\mathcal{A}_0=\mathbb{P}^n\setminus L_0$ and hence has fixed points in $\mathcal{A}_0$, namely, barycenters of orbits. Thus $p_0\in\mathcal{A}_0$ and the claim is proved.

To finish the contradiction argument, we put 
$$
C=\bigcup_{x\in \Omega\cap L_0}[p_0, x],
$$
where $[p_0,x]$ is the segment joining $p_0$ and $x$ within $\Omega$. Namely, $C$ is the cone over $p_0$ generated by
$\Omega\cap L_0$. We consider $\mathcal{A}_0$ as a vector space with origin $p_0$, so that $C$ is a properly convex cone in $\mathcal{A}_0$ (\textit{i.e.} the projectivization of $C$ is properly convex in $\mathbb{P}(\mathcal{A}_0)$).
%
Since $\Gamma_0$ preserves $C$, taking the barycenter of a non-zero $\Gamma_0$-orbit in $C$ gives a fixed point of $\Gamma_0$ different from $p_0$, contradicting the fact $\Fix(\Gamma_0)=\{p_0\}$ which we have established above.
\end{proof}
\begin{proof}[Proof of Proposition \ref{shape}]
We only consider the $n=3$ case to simplify notations. Thus we fix
a circular hyperbolic Coxeter diagram $J=(m_{ij})$ with nodes $\{0,1,2,3\}$. Any
$\A\in\mathcal{M}_J$ has the form
$$\A=
\left(
\begin{array}{cccc}
1&a_{01}&0&a_{03}\\
a_{10}&1&a_{12}&0\\0&a_{21}&1&a_{23}\\
a_{30}&0&a_{32}&1
\end{array}
\right)$$
with $a_{ij}<0$ and $a_{ij}a_{ji}=\cos^2(\pi/m_{ij})$.

We define a one-parameter family of matrices
$\{\A_t\}_{t\in\mathbb{R}}\subset\mathcal{M}_J$ by
$$
\A_t=\left(
\begin{array}{cccc}
1&-t\cos^2(\frac{\pi}{m_{01}})&0&-t^{-1}\\
-t^{-1}&1&-t\cos^2(\frac{\pi}{m_{12}})&0\\
0&-t^{-1}&1&-t\cos^2(\frac{\pi}{m_{23}})\\
-t\cos^2(\frac{\pi}{m_{30}})&0&-t^{-1}&1\\
\end{array}
\right).
$$

Since $|\cyp{\A_t}{0,1,2,3}|=t^4$, by the proof of Proposition
\ref{goldman}, every $\A\in\mathcal{M}_J$ is
$H^+$-conjugate to a unique $\A_t$. We assert that the family of representations $$\rho_t=\rho_{\A_t}: W_J\rightarrow\PGL_{n+1}\mathbb{R}\,, \quad t\in \mathbb{R}_+$$ given by the Tits-Vinberg theorem is the required one.

To see this, let $f_i(t)$ be the point in $\mathbb{P}^n$ with coordinates given by the $i^\mathrm{th}$ column of
$\A_t$ and let $p_0=[1:0:0:0],\cdots,p_3=[0:0:0:1]$ be the vertices of $P$. Each
$f_i(t)$ converges to $p_{i+1}$ when $t\rightarrow0$, and to
$p_{i-1}$ when $t\rightarrow+\infty$ (here the indices are counted
mod $4$). Therefore the simplex bounding $\Omega_t=\Omega_{\A_t}$ given by Lemma
\ref{bound} converges to $P$ in the Hausdorff topology, hence so does $\Omega_t$.
\end{proof}

\section{Metric geometry of simplicial Tits sets}\label{section_metric}
The \emph{Hilbert metric} $d_\Omega$ on a properly convex open set $\Omega\subset\mathbb{P}^n$ is defined as follows. Take any
affine chart $\mathbb{R}^n$ containing the closure $\overline{\Omega}$. For $x,y\in\Omega$,
let $x',y
$ be the points on the boundary $\partial\Omega$ such
that $x',x,y,y'$ lie consecutively on the segment $[x',y']$. We then put
\begin{equation}\label{hilbert metric}
d_\Omega(x,y)=\frac{1}{2}\log
|[x', x, y, y']|,
\end{equation}
where $[x', x, y, y']= \frac{(x'-y)(y'-x)}{(x'-x)(y'-y)}$ is the cross-ratio.

We refer to \cite{harpe} for basic properties of $d_\Omega$. A crucial property which we will use implicitly several times below is that geodesics in $(\Omega,d_\Omega)$ are straight lines.

We shall study the geometry of the
Hilbert metric $d_t=d_{\Omega_t}$ on the Tits set $\Omega_t$ given by Proposition \ref{shape} and constructed in the previous section. The goal of this section is to prove Theorem \ref{thm1} and
Corollary \ref{surface} admitting the technical Lemma \ref{main}.

We begin with the observation that $\Omega_t$ is a simplicial complex whose $k$-cells are translates of
the $k$-cells of $P$ by the $W_J$-action. We denote the $k$-skeleton
of $\Omega_t$ by $\Omega_t^{(k)}$ and let $d_t^{(k)}$ be the
intrinsic geodesic metric on $\Omega_t^{(k)}$ induced by $d_t$, \textit{i.e.}
$$
d_t^{(k)}(x,y)=\min\{l_t(\gamma)\mid \gamma\subset\Omega^{(k)}_t \mbox{ is a piecewise geodesic joining $x$ and $y$} \}.
$$
Here $l_t(\gamma)$ is the length of $\gamma$ measure under $d_t$. 
In particular,
$(\Omega_t^{(1)},d_t^{(1)})$ is a metric graph, whereas $d^{(n)}_t$ is just $d_t$ itself. 

Lemma \ref{main} implies that these metrics are uniformly equivalent to each other:
\begin{lemma}\label{metric comparison}
Suppose $2\leq k\leq n$. There is a constant $C$ depending only on
$J$ such that for any $t\in\mathbb{R}_+$ and $x,y\in
\Omega_t^{(k-1)}$, we have
\begin{equation}\label{equation_compa1}
d_t^{(k)}(x,y)\leq d_t^{(k-1)}(x,y)\leq Cd_t^{(k)}(x,y)
\end{equation}
As a result, iterating the above inequality for $k=2,\cdots, n$, we get 
$$d_t(x,y)\leq d_t^{(1)}(x,y)\leq C'd_t(x,y)$$ for
a constant $C'$ depending only on $J$.
\end{lemma}
\begin{proof}
The first ``$\leq$" in (\ref{equation_compa1}) follows immediately from the definition of the $d^{(k)}_t$'s.

We prove the second ``$\leq$" in (\ref{equation_compa1}). Let
$c:[0,1]\rightarrow\Omega_t^{(k)}$ be a piecewise geodesic joining
$x, y\in\Omega_t^{(k-1)}$ such that the length of $c$ equals
$d_t^{(k)}(x,y)$.

Let $t_0, t_1, t_2, \cdots, t_r\in[0,1]$ with $t_0=0$ and $t_r=1$ be such that each
$c([t_{i-1},t_i])$ lies in a single $k$-cell and that the $c(t_{i})$'s
are in $\Omega_t^{(k-1)}$. Since $c$ is length-minimizing, each
$c[t_{i-1},t_i]$ must be a segment, whose length equals the distance
between then two end points. Thus if we could prove
$$d_t^{(k-1)}(c(t_{i-1}),c(t_i))\leq Cd_t^{(k)}(c(t_{i-1}),c(t_i))
$$ then we can take the sum
over $1\leq i\leq r$ and use the triangle inequality to obtain
$$d_t^{(k-1)}(x,y)\leq Cd_t^{(k)}(x,y).$$

Therefore, we can assume that both $x$ and $y$ lie on the boundary
of a $k$-cell. Since each $k$-cell is isometric to some sub-cell of
$P$, it is sufficient to prove that, for any $k$-dimensional sub-cell
$F$ of $P$ we have 
$$
d_t^{(k-1)}(x,y)\leq Cd_t^{(k)}(x,y)=Cd_t(x,y).
$$
for $t\in\mathbb{R}_+$ and $x,y\in F$.

If $x, y$ both lie on the same $(k-1)$-dimensional sub-cell of $F$,
then we have $d_t^{(k)}(x,y)= d_t^{(k-1)}(x,y)$ and there is nothing
to prove. So we assume that $x$ and $y$ belong to $(k-1)$-dimensional sub-cells $A$ and $B$, respectively. $E=A\cap B$
is a $(k-2)$-dimensional sub-cell. Let $x_0, y_0$ be a point in $E$ nearest to $x$, $y$, respectively, \textit{i.e.} $d_t(x, E)=d_t(x,
x_0)$ and $d_t(y, E)=d_t(y, y_0)$.

The three segments $[x,x_0]$, $[x_0,y_0]$ and $[y_0,y]$ lie in
$\Omega_t^{(k-1)}$ and form a piecewise segment joining $x, y$, so the definition of $d_t^{(k-1)}$ implies \begin{equation}\label{inequ1}
d_t^{(k-1)}(x,y)\leq d_t(x,x_0)+d_t(x_0,y_0)+d_t(y_0,y).
\end{equation}
By the triangle inequality, we have
\begin{equation}\label{inequ2}
d_t(x_0,y_0)\leq d_t(x_0,x)+d_t(x,y)+d_t(y,y_0).
\end{equation}
(\ref{inequ1}) and (\ref{inequ2}) gives
$$
d_t^{(k-1)}(x,y)\leq 2(d_t(x,x_0)+d_t(y,y_0))+d_t(x,y)=2(d_t(x,E)+d_t(y,E))+d_t(x,y)
$$
Now we apply Lemma \ref{main}, and conclude that
$$
d_t^{(k-1)}(x,y)\leq (2C+1)d_t(x,y)
$$
this is the required inequality.
\end{proof}

\begin{proof}[Proof of Theorem 1]Note that each vertex of the simplex $P$ lies on different
orbits of $W_J$, so the vertex set $\Omega_t^{(0)}$ is the union of
$n+1$ orbits. Hence, fixing any vertex $v_0$, we have the follow
expression for the entropy $\delta_t=\delta(\Omega_t,d_t,W_J)$:
$$
\delta_t=\varlimsup_{R\rightarrow\infty}\frac{1}{R}\log\#\{v\in\Omega_t^{(0)}|d_t(v,v_0)\leq
R\}.
$$

We shall compare $\delta_t$ with the entropy $\delta_t^{(1)}=\delta(\Omega_t^{(1)},d_t^{(1)},W_J)$ of the metric graph
$(\Omega_t^{(1)},d_t^{(1)})$, which is defined by 
$$\delta_t^{(1)}=\varlimsup_{R\rightarrow\infty}\frac{1}{R}\log\#\{v\in\Omega_t^{(0)}|d_t^{(1)}(v,v_0)\leq
R\}
$$
The comparison of $d_t^{(1)}$ and $d_t$ given by Lemma \ref{metric comparison} implies there is a
constant $C'$ depending only on $J$ such that
$$
\delta_t^{(1)}\leq \delta_t\leq C'
\delta_t^{(1)}.
$$
So it is sufficient to prove
$$\delta_t^{(1)}\rightarrow 0 \quad\mbox{ as  } t\rightarrow0\mbox{ or }t\rightarrow+\infty.$$

To this end, let $m(t)$ be the minimal length of edges of $P$ under
$d_t$. We have seen in Proposition \ref{shape} that $\Omega_t$
approaches the simplex $P$ when $t\rightarrow0$ or $+\infty$. Using
the expression of Hilbert metric (\ref{hilbert metric}) one can see
the length of each edge of $P$ tends to $+\infty$, thus
$m(t)\rightarrow +\infty$.

On the other hand, a $W_J$-invariant geodesic metric on the graph
$\Omega_t^{(1)}$ is uniquely determined by lengths of the edges of
$P$ and is monotone with respect to each of these lengths. Therefore,
if we let $d'$ be the metric on the graph defined by setting all edge lengths to
be $1$, then we have $d_t^{(1)}\geq m(t)d'$. This allows us to compare the entropy $\delta_1^{(1)}$ defined by $d_t^{(1)}$ with the one defined by $d'$:
$$
\delta_t^{(1)}\leq \frac{1}{m(t)}\delta(\Omega_t^{(1)},d',W_J).
$$
But the right-hand side tends to $0$ because $\delta(\Omega_t^{(1)},d',W_J)$ is a constant.
\end{proof}

\begin{proof}[Proof of Corollary \ref{surface}]
Let $\Sigma$ be a surface with genus $\geq 2$. We claim that there are integers $p,q,r\geq 3$ with
$\frac{1}{p}+\frac{1}{q}+\frac{1}{r}<1$ and a subgroup  $\Pi$ of finite index in the
$(p,q,r)$-triangle group $\Delta=\Delta_{p,q,r}$ such that $\Pi$
acts freely on the hyperbolic plan $\mathbb{H}^2$ with quotient 
$\mathbb{H}^2/\Pi\cong\Sigma$.  Restricting the one-parameter family of representations
$\rho_t:\Delta\rightarrow\PGL_{n+1}\mathbb{R}$ given by Proposition \ref{goldman} and
\ref{shape} to $\Pi$, we obtain an one-parameter
family of convex projective structures on $\Sigma$. We shall show that this family fulfils the requirements. 

Since $\Pi\subset\Delta$ has finite index, the entropy  $\delta(\Omega_t,d_t,\Pi)$ of the convex projective surface equals the entropy $\delta(\Omega_t,d_t,\Delta)$ which tends to $0$ by Theorem \ref{thm1}.

Lemma \ref{main} implies that for any $t$, every triangle
inscribed in the fundamental triangle $P$ has perimeter greater than
$\frac{1}{C}$ times the perimeter of $P$ (measure by $d_t$). But the latter perimeter tends to $+\infty$ because of convergence of $\Omega_t$ to $P$. Thus the constant of Gromov hyperbolicity of
$\Omega_t$ tends to $+\infty$.

To show that the systole tends to $+\infty$, we take a
homotopically non-trivial closed curve $c$ which is the shortest
under $d_t$. The image of $c$ under the orbifold covering map
$\Sigma\cong\mathbb{H}^2/\Pi\rightarrow \mathbb{H}^2/\Delta\cong P$ is a
closed billiard trajectory in the triangle $P$ which hits each of the three sides. The same argument as in the previous paragraph shows that the length of $c$ goes to $+\infty$.

Finally, we prove the claim using an explicit constructions. In the picture below,
\begin{figure}[h]
\centering
\includegraphics[width=2.5in]{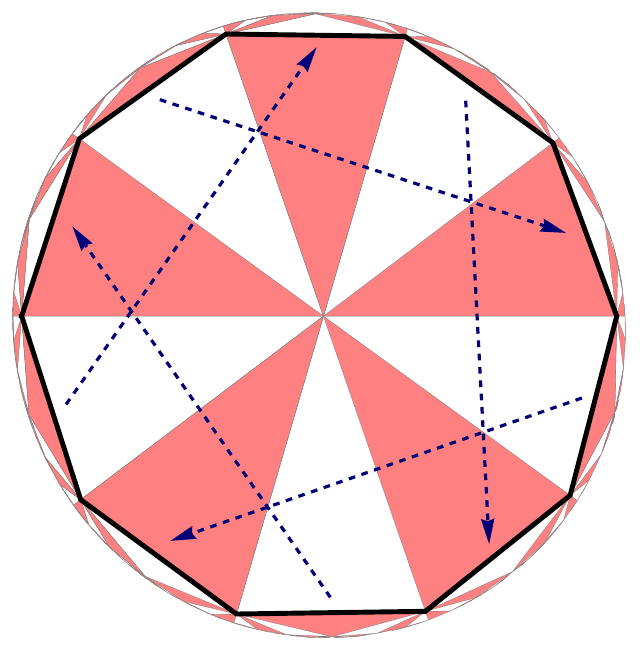}
\end{figure}
The boldfaced $10$-gon consists of ten fundamental domains of the triangle group $\Delta=\Delta_{5,5,5}$. We take the five elements in $\Delta$ indicated by the arrows, each of them pushing the
$10$-gon to an adjacent one. One checks that the group $\Pi$
generated by them has the $10$-gon as a fundamental
domain. The quotient $\mathbb{H}^2/\Pi$ is a surface obtained by
pairwise gluing edges of the $10$-gon. A calculation of Euler
characteristic shows $\mathbb{H}^2/\Pi$ have genus $2$. Since closed
surfaces of higher genus covers the surface of genus $2$, by taking
subgroups of $\Pi$, we conclude that all surfaces of genus $\geq 2$
is the quotient of $\mathbb{H}^2$ by some subgroup of $\Delta$, and the claim is proved.
\end{proof}

\section{Proof of Lemma \ref{main}}\label{section_proof}

To begin with, we need the following fact concerning the cellular structure of $\Omega_t$. 
Looking at Figure \ref{limit},
one observes that the $1$-skeleton of $\Omega_t$ consists of
straight lines. More generally, in higher dimensions, the $k$-skeleton $\Omega_t^{(k)}$ is also a union of $k$-dimensional subspaces\footnote{By a ``subspace" of $\Omega_t$, we mean the intersection of a
projective subspace of $\mathbb{P}^n$ with $\Omega_t$.}, or equivalently, the $k$-dimensional subspace $L$
containing some $k$-cell must be an union of $k$-cells. This can be
proved using the fact that the tangent space of a vertex in
$\Omega_t$ has the structure of a finite Coxeter complex, and it is
well-known that the above statement holds for finite Coxeter complex
(see \textit{e.g.} \cite{humphreys}). We omit the details.

We first present a proof of Lemma \ref{main} for the
simplest $2$-dimensional case, since the main idea is most transparent in this
case.
\begin{proof}[Proof of Lemma \ref{main} for $n=2$]
We may assume
$$W_J=\langle\,\tau_1,\tau_2,\tau_3\mid(\tau_1\tau_2)^p=(\tau_2\tau_3)^q=(\tau_3\tau_1)^r=\tau_1^2=\tau_2^2=\tau_3^2=1\,\rangle.$$

Suppose $x$ and $y$ lie on the sides $A$ and $B$ of a triangle $P$
in $\mathbb{P}^2$, respectively. Denote the common vertex of $A$ and
$B$ by $E$. We need to prove that $$Cd_t(x,y)\geq d_t(x,E)+d_t(y,E),\quad\forall t$$

Put $s_1=\rho_t(\tau_1)$ and
$s_2=\rho_t(\tau_2)$. So $s_1$ and $s_2$ are reflections with respect to $A$ and
$B$, respectively, whereas $s_1s_2$ is a rotation of order $p$.

If $p$ is odd, we put 
$$y'=\underbrace{s_2s_1s_2\cdots s_1}_{p-1\mbox{ reflections }}(y).$$ 
Then $y'$ lies on the opposite half ray of the geodesic ray $\vec{Ex}$ (see Figure
\ref{odd case}). On the other hand, the successive images of $[x,y]$
by the sequence of projective transforms
$$s_2, \ s_2s_1, \ s_2s_1s_2,\ \cdots, \ \underbrace{s_2s_1s_2\cdots
s_1}_{p-1}$$ consititue a piecewise geodesic $\gamma$ joining $x$ and $y'$. $\gamma$ 
consists of $p$ pieces, each one with the same length
$d_t(x,y)$. Thus we have 
$$
p\,d_t(x,y)\geq
d_t(x,y')\geq d_t(x,E).
$$

When $p$ is even, we obtain the above inequality with $x'=s_2s_1\cdots s_2(x)$ replacing $y'$ in the same way (see Figure \ref{even case}).

Interchanging the roles of $x$ and $y$, we get 
$$
p\,d_t(x,y)\geq d_t(y,E)
$$
and conclude that
$$
2p\,d_t(x,y)\geq d_t(x,E)+d_t(y,E).
$$
\begin{figure}[h]
\begin{minipage}[t]{0.45\linewidth}
\centering
\includegraphics[width=2in]{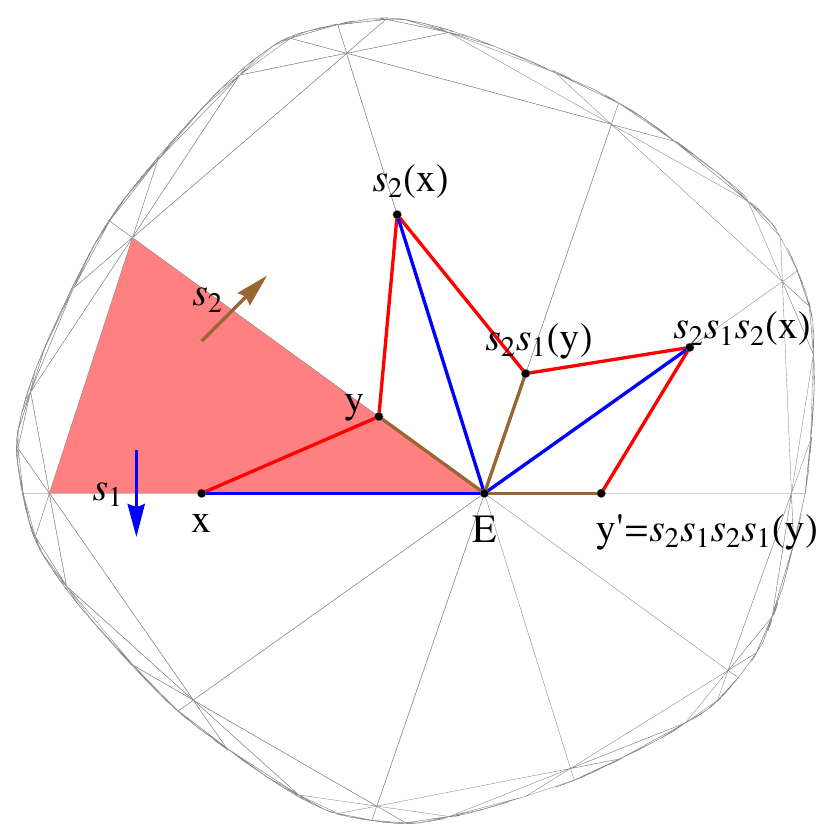}
\caption{$p=5$} \label{odd case}
\end{minipage}
\begin{minipage}[t]{0.45\linewidth}
\centering
\includegraphics[width=2in]{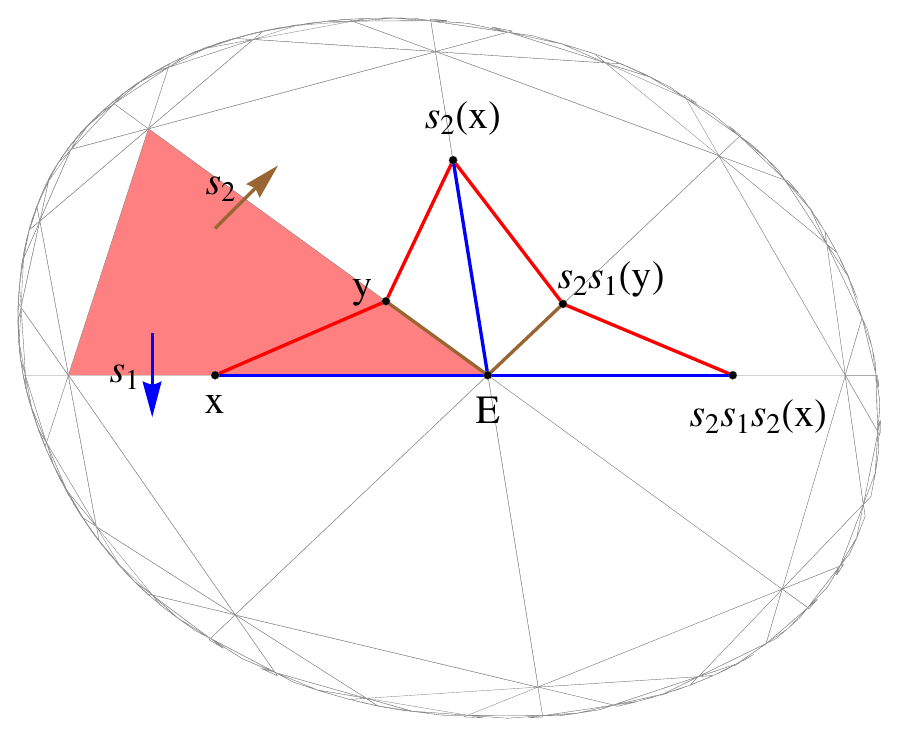}
\caption{$p=4$} \label{even case}
\end{minipage}
\end{figure}
\end{proof}

To tackle the higher-dimensional case, we introduce the following terminology. Let $E$ be a $(k-1)$-cell of
$\Omega_t$. We say two $k$-cells are \emph{$E$-colinear}, if they
lie on the same $k$-dimensional subspace and their intersection is
$E$.  As explained in the beginning of this section, the
$k$-dimensional subspace of $\Omega_t$ containing a $k$-cell $A$ is
a union of $k$-cells, so for any $(k-1)$-sub-cell $E$
of $A$, there is an unique $k$-cell which is $E$-colinear to $A$.

The crucial point of the above proof in dimension $2$ is the following fact: let $V$ be
the $k$-cell $E$-colinear to $A$. Then we can connect $x\in A$ and
some point in $V$ by a curve piecewise isometric to
the segment $[x,y]$, where the number of pieces is bounded by a combinatorial constant. 

In higher dimensions, the situation is more delicate: the cell $V$
which is $E$-colinear to $A$ may not be a translate of $A$ or $B$. This prevents us from constructing a curve going from $x$ to $V$ which is piecewise isometric to
$[x,y]$. Instead of this, we shall take a cell
$A'=\rho_t(\gamma) A$, the translate of $A$ by some
$\gamma\in W_J$, such that $A'$ and $V$ are contained in the same
top-dimensional cell. Now we can go from $x$ to $A'$ along a curve
piecewise isometric to $[x,y]$. To prove Lemma \ref{main}, we then
need to show that the distance from $x$ to $A'$ is greater than the
distance from $x$ to $E$. In order to do this, we will develop some
lemmas concerning distance comparisons in Hilbert geometry. But before going into that, the reader might find it useful to keep in mind the following typical example of the above situation: take $n=3$ and $k=1$ such that $E$ is a vertex while $A$ and $V$ are edges. Assume that the sub-diagram in $J$ corresponding to $E$
 is \footnote{ Given a sub-cell $E$ of the simplex $P$, the sub-diagram of $J$ corresponding to $V$ is the Coxeter diagram consisting of nodes $i$ satisfying $E\subset P_i$ and the same weights as in $J$.}  $\xy (-7,0)*{\circ}="a1";(0,0)*{\circ}="a2";(7,0)*{\circ}="a3";{\ar @{-}"a1";"a2"};{\ar @{-}"a2";"a3"}\endxy$, whose first two nodes form the sub-diagram corresponding to $A$.  The $3$-cells containing $E$ then form the configuration of the barycentric subdivision of a tetrahedron, as partly shown in the picture below. Here $P$ is the tetrahedron $abcE$, while $A$ and $V$ are the segments $[a,E]$ and $[v, E]$, respectively. We can take $A'$ to be either $[a_1,E]$, $[a_2,E]$ or $[a_3,E]$. 
\begin{figure}[h]
\centering
\includegraphics[width=1.6in]{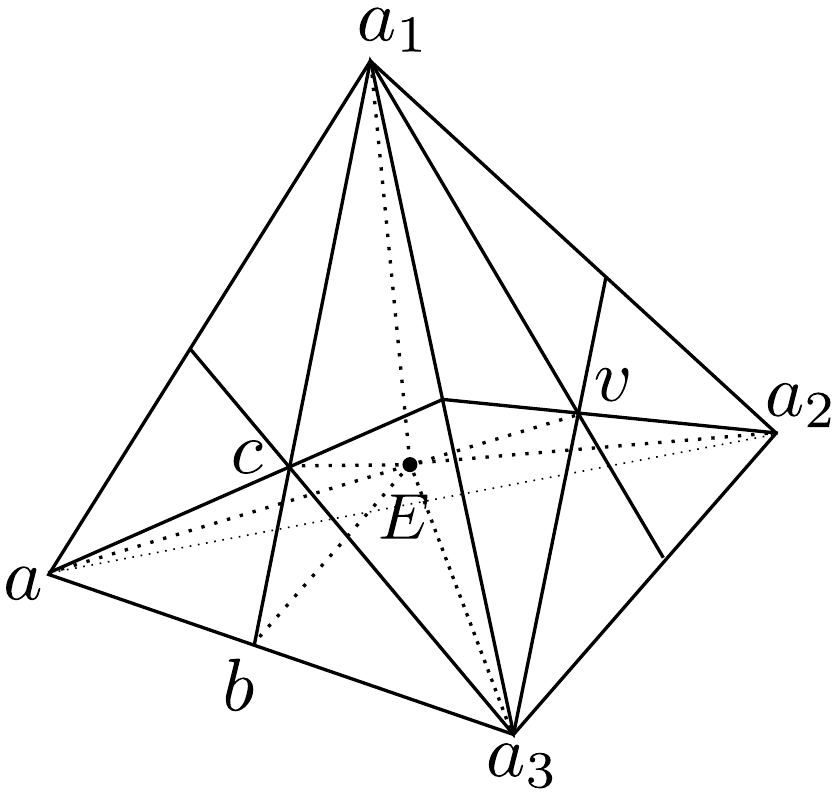}
\end{figure}

Now we discuss distance comparison in Hilbert geometry. Using the definition of Hilbert metric (\ref{hilbert metric}), it
can be shown that if $\Omega\subset\mathbb{P}^n$ is a properly
convex open set which is strictly convex (this is the case for our $\Omega_t$ in question, \textit{c.f.} Section
$2$), then the Hilbert metric $d_\Omega$ has the following property.
Let $L\subset\Omega$ be a subspace and $x\in\Omega\setminus L$. Among all points of $L$, there
is an unique $x_0\in L$ whose distance to $x$ is minimal. We
call $x_0$ the \emph{projection} of $x$ on $L$, and denote it by
$x_0=\Pr_L(x,L)$.

Let $L\subset\Omega$ be a hyperplane, \textit{i.e.} subspace of codimension $1$. We say that $\Omega$ have reflectional symmetry $s$ with
respect to $L$ if $s\in\PGL_{n+1}\mathbb{R}$ is a reflection
preserving $\Omega$ and fixing each point of $L$.  In this case, the
triangle inequality and the fact that geodesics are straight lines
yields the following simple characterization of projection:
\begin{equation}\label{hilbert_projection}
\Pr(x,L)=[x,s(x)]\cap L
\end{equation}
\begin{lemma}\label{projectionlemma}
Let $\Omega\subset\mathbb{P}^n$ be a properly strictly convex open
set with reflectional symmetry $s$ with respect to a hyperplane $L$.
Then for any $x,y\in\Omega$, we have
$$
d_\Omega(\Pr(x,L),\Pr(y,L))\leq d_\Omega(x,y).
$$
In particular, if $x\in L$, then for any $y\in\Omega$ we have
$$
d_\Omega(x,\Pr(y,L))\leq d_\Omega(x,y).
$$
\end{lemma}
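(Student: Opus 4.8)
The plan is to reduce the statement to the case $n=2$, exploiting the reflectional symmetry $s$ to set up an explicit two-dimensional picture, and then to use the characterization \eqref{hilbert projection} together with the fact that geodesics in a strictly convex $\Omega$ are straight segments. First I would note that the second assertion is the special case $x\in L$ of the first, so it suffices to prove the inequality $d_\Omega(\Pr(x,L),\Pr(y,L))\leq d_\Omega(x,y)$ for arbitrary $x,y\in\Omega$. Write $x'=\Pr(x,L)$ and $y'=\Pr(y,L)$. By \eqref{hilbert projection}, $x'=[x,s(x)]\cap L$ and $y'=[y,s(y)]\cap L$, so $x'$ is the midpoint of $[x,s(x)]$ in the Hilbert metric (by the symmetry $s$, $d_\Omega(x,x')=d_\Omega(s(x),x')$) and likewise for $y'$.

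The key step is a convexity/midpoint argument. Consider the Hilbert-geodesic segment $[x,y]$, which is a straight segment, and its image $s([x,y])=[s(x),s(y)]$. Since $s$ is an isometry of $(\Omega,d_\Omega)$ fixing $L$ pointwise, we have $d_\Omega(x',y')=d_\Omega(s(x'),s(y'))$ trivially, but more usefully: $x'$ lies on $[x,s(x)]$ and $y'$ lies on $[y,s(y)]$. I would now invoke the fact that in a Hilbert geometry on a properly convex set the distance function is convex along pairs of geodesics — more precisely, for a strictly convex $\Omega$ one has the inequality that the midpoint map is $1$-Lipschitz in the following sense: if $x',y'$ are the midpoints of geodesic segments $[x,s(x)]$ and $[y,s(y)]$ respectively, then $d_\Omega(x',y')\leq\tfrac12\big(d_\Omega(x,y)+d_\Omega(s(x),s(y))\big)=d_\Omega(x,y)$, the last equality because $s$ is an isometry. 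Alternatively, and perhaps more self-containedly given what the paper has set up, I would argue directly: the function $t\mapsto d_\Omega(\gamma_1(t),\gamma_2(t))$ is convex for affinely parametrized geodesics $\gamma_1,\gamma_2$ in a Hilbert geometry (this is a standard consequence of the cross-ratio formula \eqref{hilbert metric} and convexity of $\Omega$), and apply this to $\gamma_1$ running from $x$ to $s(x)$ and $\gamma_2$ running from $y$ to $s(y)$, evaluating at the midpoint parameter.

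The main obstacle I expect is justifying the convexity of $t\mapsto d_\Omega(\gamma_1(t),\gamma_2(t))$ — this is true but requires a genuine argument from the cross-ratio definition (one projects onto the $2$-plane spanned by the two geodesics, uses that the intersection of $\Omega$ with this plane is convex, and estimates the cross-ratios; the strict convexity is what guarantees the projection $\Pr(\cdot,L)$ is well-defined and unique, as stated just before the lemma). Once this convexity is in hand, the lemma follows in one line: the midpoint values $d_\Omega(x',y')$ are bounded by the average of the endpoint values $d_\Omega(x,y)$ and $d_\Omega(s(x),s(y))$, and these two endpoint values are equal because $s$ preserves $d_\Omega$. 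I would present the convexity as the substantive lemma and then deduce the projection inequality as its corollary, citing \cite{crampon, benoist_1} for the convexity of the Hilbert distance along geodesics if a full proof is deemed too lengthy for the exposition here.
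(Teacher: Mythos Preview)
Your approach has a genuine gap. The key step---convexity of $t\mapsto d_\Omega(\gamma_1(t),\gamma_2(t))$ along pairs of geodesics, or equivalently the midpoint inequality $d_\Omega(m_1,m_2)\leq\tfrac12\bigl(d_\Omega(a_1,b_1)+d_\Omega(a_2,b_2)\bigr)$---is \emph{false} for general Hilbert geometries. This is precisely Busemann's non-positive curvature condition, and a classical theorem of Kelly and Straus shows that a Hilbert geometry satisfies it if and only if $\Omega$ is an ellipsoid. Since the entire paper concerns deformations $\Omega_t$ away from the ellipsoidal (hyperbolic) case, you cannot invoke this property; the references \cite{crampon, benoist_1} will not supply it either, as strict convexity of $\partial\Omega$ is far weaker than Busemann convexity of $d_\Omega$. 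Your own remark that ``the main obstacle is justifying the convexity'' is exactly right, but the obstacle is insurmountable, not merely technical.

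The paper's argument sidesteps metric convexity entirely and exploits the projective structure. The reflection $s$ has, in addition to $L$, an isolated fixed point $p\notin L$; the line through $x$ and $s(x)$ therefore passes through $p$, so by \eqref{hilbert projection} the projection $\Pr(x,L)$ is simply $\overline{xp}\cap L$, i.e.\ central projection from $p$. Hence $x,y,\Pr(x,L),\Pr(y,L)$ all lie in the $2$-plane $\overline{pxy}$, and one may replace $\Omega$ by its intersection with this plane. There the two lines from $p$ tangent to $\partial\Omega$ touch it exactly at the two points $q_1,q_2$ of $L\cap\partial\Omega$ (by the reflectional symmetry). Invariance of the cross-ratio for four concurrent lines through $p$ makes $[q_1,\Pr(x,L),\Pr(y,L),q_2]$ equal to the cross-ratio of the same pencil cut by the line $\overline{xy}$; since that line meets the tangent lines no closer to $[x,y]$ than it meets $\partial\Omega$, the Hilbert cross-ratio on $\overline{xy}$ is at least as large, giving $d_\Omega(\Pr(x,L),\Pr(y,L))\leq d_\Omega(x,y)$ by an elementary comparison.
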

\begin{figure}[h]
\centering
\includegraphics[width=3.1in]{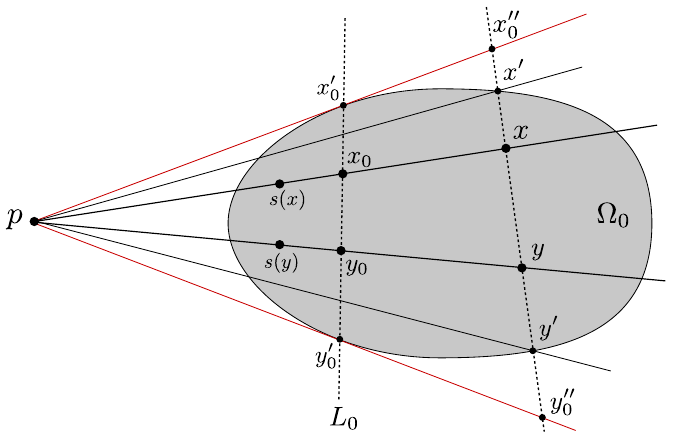}
\caption{$d_\Omega(x_0,y_0)\leq d_\Omega(x,y)$} \label{projection}
\end{figure}
\begin{proof}
Denote $x_0=\Pr(x,L)$ and $y_0=\Pr(y,L)$. The reflection $s$ has
another fix point $p\in\mathbb{P}^n$ outside $L$. We have 
$x_0=\overline{xp}\cap L$ and $y_0=\overline{yp}\cap L$. Therefore the four points $x, y, x_0, y_0$ lie on
the plane $\overline{pxy}$, which is preserved by $s$. So we are reduced to the $2$-dimensional case by restricting the consideration to $\Omega_0=\overline{pxy}\cap\Omega$ and $L_0=\overline{pxy}\cap L$.

Suppose that $L_0$ intersects $\partial\Omega_0$ at $x_0'$ and 
$y_0'$, see Figure
\ref{projection}. Since $\Omega_0$ has reflectional symmetry with respect to
$L_0$, the lines $\overline{px_0'}$ and $\overline{py_0'}$ are tangent
to $\Omega$. Let $x_0''$ (resp. $y_0''$) be the intersection of $\overline{px_0'}$ (resp. $\overline{py_0'}$) with $\overline{xy}$. It is a basis fact from projective geometry that we have an equality of cross-ratios 
$$
[x_0'\,,x_0\,,y_0\,,y_0']=[x_0''\,,x\,,y\,,y_0''].
$$

Since $x'$ and $y'$ lie strictly inside the segment $[x_0'', y_0'']$, we have
$$
\big|[x_0'\,,x_0\,,y_0\,,y_0']\big|=\big|[x_0''\,,x\,,y\,,y_0'']\big|\leq\big|[x'\,,x\,,y\,,y']\big|.
$$
It follows that $d_\Omega(x_0,y_0)\leq d_\Omega(x,y)$.
\end{proof}
 
When there are several reflectional symmetries, we can strengthen the above lemma in the following way. This is the nontrivial ingredient that we need in generalizing the above proof Lemma \ref{main} to higher dimension.
\begin{lemma}\label{projectionlemma2}
Let $\Omega\subset\mathbb{P}^n$ be a properly strictly convex open
set with reflectional symmetries $s_1, \cdots, s_m$ ($m\leq n-1$) with respect to
hyperplanes $L_1, L_2, \cdots, L_m$, such that the $s_i$'s
generate a finite group $\Gamma$. Assume that $W=L_1\cap\cdots\cap L_m$
has dimension $n-m$ (\textit{i.e.} the $L_i$'s are in general position) and $W\cap\Omega\neq\emptyset$. Let $D$ be a
$\Gamma$-invariant convex subset of $\Omega$.

Then for any $x\in W\cap\Omega$ and any $x'\in D$, there
is some point $x_0\in W\cap D$ such that
$$
d_{\Omega}(x,x_0)\leq d_\Omega(x,x')
$$
\end{lemma}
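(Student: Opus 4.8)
The plan is to reduce everything to $W$ by projecting $x'$ down onto $W$ via a finite sequence of projections along the hyperplanes $L_i$, and to control each step using Lemma \ref{projectionlemma}. First I would set up notation: since $\Gamma = \langle s_1,\dots,s_m\rangle$ is finite and each $s_i$ is a reflection fixing $L_i$ pointwise, the common fixed subspace of $\Gamma$ is exactly $W = L_1\cap\dots\cap L_m$. The idea is that $\Pr(\,\cdot\,,W)$ should be expressible as a composition $\Pr(\,\cdot\,,L_{i_r})\circ\dots\circ\Pr(\,\cdot\,,L_{i_1})$ for a suitable word; this is the standard "successive projections onto the walls of a Coxeter complex converge to the projection onto the intersection of the walls" phenomenon. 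Concretely, restrict to the $\Gamma$-invariant subspace $S\subset\Omega$ spanned by $W$ and the $\Gamma$-orbit of $x'$; this $S$ is again properly strictly convex with the same reflectional symmetries, so we may work inside $S$, where $W$ has codimension equal to the number of independent reflections needed.

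The key steps, in order: (1) Observe $x_0 := \Pr(x', W)$ is a well-defined point of $W$ (by strict convexity and since $x\in W\cap\Omega\ne\emptyset$ shows $W$ meets $\Omega$). (2) Show $x_0\in D$: since $D$ is $\Gamma$-invariant and convex, $x_0$ can be obtained from $x'$ by iterating the projections $\Pr(\,\cdot\,,L_i)$, and by equation \eqref{hilbert projection} each such projection of a point $z\in D$ is $[z,s_i(z)]\cap L_i$, a point on a segment joining two points of $D$, hence in $D$; iterating, the limit point $x_0$ lies in $\overline{D}\cap\Omega$, and one checks it is actually in $D$ (using that the relevant word stabilizes after finitely many letters, or by taking a $\Gamma$-average / barycenter argument: the barycenter of the $\Gamma$-orbit of $x'$ is a $\Gamma$-fixed point in $D$, hence lies in $W\cap D$, and is the projection $x_0$). (3) Apply the "in particular" clause of Lemma \ref{projectionlemma} repeatedly: since $x\in W\subset L_i$ for every $i$, each projection step can only decrease (not increase) the distance from $x$, so $d_\Omega(x,x_0)\le d_\Omega(x,x')$, which is the desired inequality.

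The main obstacle I anticipate is step (2): making rigorous that the projection $\Pr(\,\cdot\,,W)$ onto the $\Gamma$-fixed subspace is reached from $x'$ by a finite (or suitably convergent) sequence of wall-projections, and that the resulting point genuinely lies in the \emph{open} convex set $D$ rather than merely its closure. The cleanest route is probably to bypass the iteration entirely: take the barycenter $b$ of the finite orbit $\Gamma x'$ computed in any affine chart containing $\Omega$ — then $b\in D$ by convexity and $\Gamma$-invariance of $D$, and $b$ is fixed by $\Gamma$ hence $b\in W$; it remains to identify $b$ with the Hilbert projection $x_0=\Pr(x',W)$, or, more economically, to just directly estimate $d_\Omega(x,b)\le d_\Omega(x,x')$. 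For the latter one can induct on $m$: projecting $x'$ to $x'':=\Pr(x',L_m)$ does not increase its distance to $x\in L_m$ by Lemma \ref{projectionlemma}, the orbit and the convex set behave well under restriction to the $\langle s_1,\dots,s_{m-1}\rangle$-invariant slice through $x''$, and one applies the inductive hypothesis inside $\Omega\cap L_m$ with the symmetries $s_1,\dots,s_{m-1}$ restricted there. This inductive scheme handles both the distance estimate and the membership $x_0\in D$ simultaneously, and is the argument I would write up.
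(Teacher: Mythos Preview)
Your overall strategy---iterate the wall projections $\Pr(\cdot,L_i)$, use Lemma~\ref{projectionlemma} at each step since $x\in W\subset L_i$, and invoke a barycenter of the $\Gamma$-orbit to produce a point of $D\cap W$---is exactly the paper's approach. You also correctly isolate the real difficulty: showing that iterated wall projections actually converge to a point of $W$.

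The gap is in your proposed resolution. The induction on $m$ does not work: after projecting $x'$ to $x''=\Pr(x',L_m)\in L_m$, you propose to apply the inductive hypothesis ``inside $\Omega\cap L_m$ with the symmetries $s_1,\dots,s_{m-1}$ restricted there.'' But $s_1,\dots,s_{m-1}$ need not preserve $L_m$ (they do only when they commute with $s_m$), so there is no such restriction. If instead you stay in $\Omega$ and apply the inductive hypothesis to the smaller group $\langle s_1,\dots,s_{m-1}\rangle$, you land in $L_1\cap\cdots\cap L_{m-1}$, which is strictly larger than $W$ and you have lost the information that $x''\in L_m$; so the induction does not close. Likewise, taking the affine barycenter $b$ of $\Gamma x'$ does give a point of $D\cap W$, but there is no direct way to compare $d_\Omega(x,b)$ with $d_\Omega(x,x')$ without the iteration---the barycenter is not a Hilbert projection.

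The paper fills this gap as follows. Choose an affine chart and a $\Gamma$-invariant Euclidean scalar product so that $W$ is linear, the origin $x_0$ lies on $W$, and $x'\in W^\perp$. Then the Hilbert projection $\Pr(\cdot,L_i)$ coincides with the Euclidean orthogonal projection onto $L_i$ (both are the midpoint formula \eqref{hilbert projection}). One covers $W^\perp$ by finitely many cones $C_i=\{y:\angle(y,L_i\cap W^\perp)\ge\theta\}$ for small $\theta>0$; at each step, if $y_k\in C_{i_k}$, projecting onto $L_{i_k}$ shrinks the Euclidean norm by a factor $\cos\theta$. Hence $y_k\to x_0$ in the Euclidean (and therefore Hilbert) sense, and continuity gives $d_\Omega(x,x_0)\le d_\Omega(x,x')$. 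The barycenter argument is used, as you suggested, to verify $x_0\in D$.
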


\begin{proof}
Fix $x\in W\cap\Omega$ and $x'\in W\cap D$. We choose an affine chart
$\mathcal{A}\subset\mathbb{P}^n$, an origin $x_0\in\mathcal{A}$ (so as to consider $\mathcal{A}$ as a vector space) and a Euclidean scalar product on $\mathcal{A}$ such that 

(1) $\mathcal{A}$ contains the closure of $\Omega$;

(2) $L_1, \cdots, L_m$ are linear subspaces of $\mathcal{A}$, \textit{i.e.} $x_0\in W$;

(3) $\Gamma$ preserves the Euclidean scalar product;

(4) $x'\in W^\bot$, where $W^\bot$ is the orthogonal complement of
$W$.

Our aim is to show that the origin $x_0$ is contained in $D$ and satisfies the required
inequality. Let us denote $x_0$ simply by $0$. We will mainly work on the subspace $W^\bot$ of $\mathcal{A}$.  Each $L_i'=L_i\cap
W^\bot$ is a subspace of $W^\bot$ of codimension $1$, and the
intersection $L_1'\cap\cdots\cap L_m'=\{0\}$. Since $D\cap W^\bot$
is $\Gamma$-invariant and  convex, the barycenter of the
$\Gamma$-orbit of $x$ lies in $D\cap W^\bot$ and is fixed by
$\Gamma$. But $L_1'\cap\cdots\cap L_m'=\{0\}$ implies that the only fixed
point of $\Gamma$ in $W^\bot$ is $0$, thus $0\in D$.

To prove $d_\Omega(x,0)\leq d_\Omega(x,x')$, we put
$$
C_i=\{y\in W^\bot|\angle(y,L_i')\geq\theta \mbox{ or } y=0\}
$$
where $\angle(y,L_i')$ is the Euclidean angle. Namely, $C_i$ consists of vectors which are apart from $L_i'$ by an angle $\theta$. We take 
$\theta$ small enough so that the union $C_1\cup\cdots\cup C_m$ is the whole $W^\bot$. Any
$y\in C_i$ verifies
$$
|\Pr\nolimits_{W^\bot}(y,L_i')|\leq|y|\cos\theta
$$
where $\Pr_{W^\bot}(y,L_i')$ is the usual Euclidean projection and coincides with the projection $\Pr(y,L_i)$ in
the sense of Hilbert geometry described earlier.

We construct a sequence of points $x'=y_0,\,y_1,\,y_2,\,\cdots\in 
W^\bot\cap D$ converging to $0$ as follows. By recurrence, assume that we already have $y_k$. Since
$C_1\cup\cdots\cup C_m=W^\bot$, there is some $C_{i_k}$ containing
$y_k$. We put $y_{k+1}=\Pr_{W\bot}(y_k, L_{i_k}')$. 

The above inequality yields
$$
|y_k|\leq|y_{k-1}|\cos\theta\leq\cdots \leq|y_0|(\cos\theta)^{k}.
$$
Hence $
\lim_{k\rightarrow\infty}y_k=0$.

We have $y_{k+1}=\Pr(y_k, L_{i_k})$ as mentioned above, so Lemma
\ref{projectionlemma} implies
$$d_{\Omega}(x,y_k)\leq d_\Omega(x,y_{k-1})\leq\cdots\leq
d_\Omega(x,y_0)=d_\Omega(x,x') .$$ Therefore, by the continuity of
$d_\Omega$, we conclude that
$$
d_\Omega(x,0)=\lim_{k\rightarrow\infty}d_\Omega(x,y_k)\leq d_\Omega(x,x').
$$
\end{proof}
Return to the particular convex set $\Omega_t$. For any cell
$V$ of $\Omega_t$, we let  $\St(V)$ denote the union of all closed $n$-cells containing $V$ (``$\St$" for ``star-like" or ``saturated").
\begin{lemma}\label{convexity}
$\St(V)$ is a convex subset of $\Omega_t$.
\end{lemma}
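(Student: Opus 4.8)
The plan is to show that $\St(V)$ is an intersection of half-spaces of $\Omega_t$, hence convex. The key observation is that $\St(V)$ should be cut out, inside $\Omega_t$, by the hyperplanes of $\Omega_t$ that contain $V$. Let me make this precise. Let $H_1,\dots,H_m$ be the (finitely many) hyperbolic reflection hyperplanes of the Coxeter complex $\Omega_t$ which pass through $V$; equivalently, writing $\mathrm{St}_{W_J}(V)$ for the subgroup of $W_J$ fixing $V$ pointwise (a finite reflection group, by condition $(ii)$ of Section \ref{section_pre}), these are the mirrors of the reflections in $\mathrm{St}_{W_J}(V)$. Each $H_\ell$ separates $\Omega_t$ into two open half-spaces; let $\overline{\Omega}_t^{+,\ell}$ and $\overline{\Omega}_t^{-,\ell}$ be the two closed ones. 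The claim I would isolate is:
\[
\St(V)=\bigcap_{\ell=1}^m\big(\overline{\Omega}_t^{+,\ell}\cup\overline{\Omega}_t^{-,\ell}\big)\ \cap\ (\text{one fixed choice of side for each }\ell),
\]
more precisely, $\St(V)$ is the union of closed chambers (of the hyperplane arrangement $\{H_1,\dots,H_m\}$ inside $\Omega_t$) whose closure meets $V$, and this union is a single closed convex region. Since a finite intersection of half-spaces of a convex set $\Omega_t$ is convex, the lemma follows once I check $\St(V)$ equals such an intersection.

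First I would reduce to the local picture at a vertex. Each $n$-cell $F$ containing $V$ is a translate $\rho_t(\gamma)P$; the combinatorics of which translates contain $V$ is governed entirely by the finite Coxeter complex structure on the link (tangent space) of any vertex $p$ of $V$ — here I use the fact, recalled at the start of Section \ref{section_proof}, that this link is a finite Coxeter complex and that subspaces of $\Omega_t$ through $p$ correspond to subspaces in that link. In a finite Coxeter complex, it is standard (see \cite{humphreys}) that the union of all top-dimensional simplices containing a given face $V$ is exactly the set obtained by intersecting the ambient complex with the reflecting hyperplanes through $V$ and choosing, on each, the half-space containing the open star of $V$; and this region is the closed convex cone spanned by $V$ together with those chambers. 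Transporting this back, $\St(V)$ inside $\Omega_t$ is the intersection of $\Omega_t$ with the half-spaces bounded by $H_1,\dots,H_m$ on the appropriate sides. Because geodesics of $d_t$ are straight projective segments (Section \ref{section_metric}) and each $H_\ell$ is the fixed-point set of a projective reflection preserving $\Omega_t$, each such half-space is a genuinely convex subset of $\Omega_t$; intersecting finitely many of them with the convex set $\Omega_t$ yields a convex set.

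The step I expect to be the main obstacle is verifying that the naive formula "$\St(V)=$ intersection of the correct half-spaces through $V$" has no extra pieces — i.e., that no $n$-cell lying on the correct side of every $H_\ell$ fails to contain $V$. This is where one genuinely needs the finite-Coxeter-complex fact rather than just convexity bookkeeping: in a general simplicial complex the star need not be convex, and it is precisely the reflection-group structure of $\mathrm{St}_{W_J}(V)$ that forces the chambers on the $V$-side of all the $H_\ell$ to be exactly those incident to $V$. I would handle this by working in the link at a vertex $p$ of $V$, where $\mathrm{St}_{W_J}(V)$ acts as a finite linear reflection group whose chambers are its Weyl chambers, citing \cite{humphreys} for the statement that the union of closed Weyl chambers meeting the face corresponding to $V$ is the convex cone they span, and then noting that a convex cone of subspaces at every vertex of $V$ patches to a convex subset of $\Omega_t$ by the straightness of $d_t$-geodesics. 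Once that local-to-global patching is justified, convexity of $\St(V)$ is immediate.
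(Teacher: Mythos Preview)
Your overall strategy---exhibit $\St(V)$ as a finite intersection of closed half-spaces of $\Omega_t$---is exactly the paper's approach. The gap is that you have chosen the wrong family of hyperplanes. You take the mirrors $H_1,\dots,H_m$ of the pointwise stabilizer of $V$; by definition each $H_\ell$ \emph{contains} $V$. Consequently $V\subset H_\ell$ lies in both closed half-spaces $\overline{\Omega}_t^{\pm,\ell}$, so there is no ``side containing the open star of $V$'' to select, and your displayed formula reduces to an intersection $\bigcap_\ell \overline{\Omega}_t^{\epsilon_\ell,\ell}$ for some choice of signs, which is a single closed chamber of the arrangement $\{H_\ell\}$. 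But $\St(V)$ genuinely straddles every $H_\ell$: if $s_\ell$ is the reflection in $H_\ell$, then for any $n$-cell $P'\supset V$ we also have $s_\ell P'\supset V$, and these two $n$-cells lie on opposite sides of $H_\ell$. Already when $V$ is an $(n-1)$-cell this is visible: there is a single mirror $H$ through $V$, and $\St(V)$ is the union of the two $n$-cells on either side of $H$, not a half-space. The ``standard fact'' you quote from Humphreys is misremembered: in a finite reflection group the closed star of a face $F$ is cut out by the walls of the fundamental chamber that do \emph{not} contain $F$, taken on the side of $F$.

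The paper's argument uses precisely that complementary family. For each $(n-1)$-cell $F$ on $\partial\St(V)$, the hyperplane $L$ spanned by $F$ does \emph{not} contain $V$ (since $V\subset F$ would force both $n$-cells adjacent to $F$ to contain $V$), so $V$ selects a unique closed half-space $L^+$. The key input, stated at the start of Section~\ref{section_proof}, is that $L$ is a union of $(n-1)$-cells; hence no $n$-cell of $\St(V)$ can cross $L$, and as each such $n$-cell contains $V\subset L^+$, it lies entirely in $L^+$. Thus $\St(V)\subset\bigcap_F L^+$, and since the boundary $(n-1)$-cells already lie on these hyperplanes, equality (and convexity) follows. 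Your local-to-global patching via links is unnecessary once the correct hyperplanes are used.
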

\begin{proof}
Let $F$ be a $(n-1)$-cell on the boundary of $\St(V)$ and
let $L$ be the hyperplane containing $F$. $L$ does not contain $V$,
so $V$ is contained in one of the two ``half-spaces" in 
$\Omega_t$ bounded by $L$. Using the fact that $L$ is an union of
$(n-1)$-cells, we conclude the whole $\St(V)$ lie in the same
half-space as $V$. Therefore, $\St(V)$ is an intersection of half-spaces, hence
convex.
\end{proof}

\begin{proof}[Proof of Lemma \ref{main}]
Fix a hyperbolic Coxeter diagram $J$. The Coxeter group is $$W_J=\langle\,\tau_0,\cdots,\tau_n\mid(\tau_i\tau_j)^{m_{ij}}=\tau_i^2=1,
\forall i\neq j\,\rangle.$$
By definition of hyperbolic Coxeter diagrams, for any $I\subsetneqq\{0,\cdots,n\}$, the
subgroup $W_{I}$ generated by $\{\tau_i\}_{i\in I}$ is finite. The \emph{word-length} of $\sigma\in W_{I}$, denoted by  $l(\sigma)$,  is defined as the minimum of the integer $k$ such that $\sigma=\tau_{i_1}\cdots\tau_{i_k}$ for some $i_1,\cdots, i_k\in I$. We define the \emph{word-length-diameter} of $W_{I}$ as 
$$\diam(W_{I})=\max_{\sigma\in W_{I}}l(\sigma)$$ 
and put 
$$
C=\max_{I\subsetneqq J}\diam(W_I)\,.
$$

We shall show that
$C\,d_t(x,y)\geq d_t(x, E)$. Then exchanging the roles of $x$ and
$y$ we get $Cd_t(x,y)\geq d_t(y, E)$ and the required inequality follows.

Let $V$ be the $k$-cell which is $E$-colinear to $A$. Let $\gamma\in \mbox{Stab}_{W_J}(E)$ be such that
 $P'=\rho_t(\gamma)P$ contains $V$. We
denote $A'=\rho_t(\gamma)A$ and $x'=\rho_t(\gamma)x$.

We first show that there is a curve joining $x$ and $x'$ which is
piecewise isometric to $[x,y]$, with number of pieces at most $C$.

Denote $s_i=\rho_t(\tau_i)$. Let $J_E\subset\{0,1,\cdots,n\}$ be the set of indices
of those $P_i$ such that $E\subset P_i$. Then $J_E$ has $n-k+1$ elements and
$\mbox{Stab}_{W_J}(E)$ is generated by $\{s_i\}_{i\in J_E}$.

We can write $\gamma=\tau_{i_1}\cdots\tau_{i_m}$, with
$i_1,\cdots,i_m\in J_E$ and $m\leq\diam(W_{J_E})\leq C$. Then $\rho_t(\gamma)=s_{i_1}s_{i_2}\cdots s_{i_m}$.
Consider the sequence of
segments
$$
s_{i_1}([x,y]),\ s_{i_1}s_{i_2}([x,y]),\ \cdots, \ s_{i_1}s_{i_2}\cdots
s_{i_m}([x,y]).
$$
The $k$-cell $A$ contains the $(k-1)$-cell $E$, so $A$ has only one
vertex $a$ lying outside $E$. Similarly $B$ has only one
vertex $b$ outside $E$. Each face $P_i$ of $P$ must contain at least
one of the two points $a$ and $b$. Hence a face containing $E$
also contains either $A$ or $B$.  It follows that if $i\in J_E$ then $s_i$
fixes either $x$ or $y$. Therefore, each segment in the above sequence
shares at least one end point with the next one. So the union of
these segments is connected, and we can extract a subset of these
segments to form a curve joining $x$ and $x'$ which
is piecewise isometric to $[x,y]$. The number of pieces is bounded by $C$.

Next, by the triangle inequality, we conclude that
$$
Cd_t(x,y)\geq d_t(x,x').
$$
So it remains to be shown that
\begin{equation}\label{final}
d_t(x,x')\geq d_t(x,E).
\end{equation}

To this end, we use Lemma \ref{projectionlemma2}. Let $\St(V)$ be the convex
set $D$ in Lemma \ref{projectionlemma2},
which contains $x'$ (in the above example $\St(V)$ is the tetrahedron $a_1a_2a_3E$). Let $J_A\subset \{0,1,\cdots,n\}$ be the set of
indices of those faces $P_i$ which contain $A$ and let $L_i$
be the hyperplane containing $P_i$.  $W=\cap_{i\in J_A}L_i$
is the $k$-dimensional subspace containing $A$ and $V$ (in the above example $W$ is the line passing through $a$, $E$ and $v$). For each $i\in J_A$, the reflection $s_i$ preserves
$\St(V)$ since $V\subset L_i$. Thus the hypothesis of Lemma \ref{projectionlemma2} is
verified and we conclude that there is $x_0\in V=\St(V)\cap W$ such that
$$
d_t(x,x')\geq d_t(x,x_0).
$$
The union $A\cup V$ is convex because it is the intersection of $\St(E)$ and a $k$-dimensional
subspace. So $[x,x_0]$ intersects $E$ at some
point $x_1$ and we have
$$
d_t(x,x_0)\geq d_t(x,x_1)\geq d_t(x,E)
$$
Hence we have obtained (\ref{final}), and the proof is complete.
\end{proof}

\bibliographystyle{plain}
\bibliography{convex}

\end{document}